\newtheorem{theorem}{Theorem}
\numberwithin{theorem}{section}
\newtheorem{corollary}[theorem]{Corollary}
\newtheorem{lemma}[theorem]{Lemma}
\newtheorem{proposition}[theorem]{Proposition}
\theoremstyle{definition}
\newtheorem{definition}[theorem]{Definition}
\newtheorem{remark}[theorem]{Remark}
\newtheorem*{claim}{Claim}
\numberwithin{equation}{section}
\title{More conservativity for weak K\H{o}nig's lemma}
\author{Anton Freund}
\author{Patrick Uftring}
\address{University of W\"urzburg, Institute of Mathematics, Emil-Fischer-Str.~40, 97074 W\"urz\-burg, Germany}
\email{\{anton.freund,patrick.uftring\}@uni-wuerzburg.de}
\thanks{Funded by the Deutsche Forschungsgemeinschaft (DFG, German Research Foundation) -- Project number 460597863.}
\begin{document}

\begin{abstract}
We prove conservativity results for weak K\H{o}nig's lemma that extend the celebrated result of Harrington (for $\Pi^1_1$-statements) and are somewhat orthogonal to the extension by Simpson, Tanaka and Yamazaki (for statements of the form~$\forall X\exists!Y\psi$ with arithmetical~$\psi$). In particular, we show that $\mathsf{WKL}_0$ is conservative over~$\mathsf{RCA}_0$ for well-ordering principles. We also show that compactness (which characterizes weak K\H{o}nig's lemma) is dispensable for certain results about continuous functions with isolated singularities.
\end{abstract}

\keywords{Reverse Mathematics, Conservativity, Weak K\H{o}nig's Lemma, Hyperimmunity, Well-Ordering Principle.}
\subjclass[2020]{03B30, 03C25, 03D80, 03F35}

\maketitle

\section{Introduction}

Weak K\H{o}nig's lemma is, essentially, the statement that $[0,1]\subseteq\mathbb R$ is compact (see~\cite{simpson09} for all claims in the present paragraph). The compactness of~$[0,1]$ is ineffective in the sense that it leads to sets $X\subseteq\mathbb N$ for which $n\in X$ cannot be decided by a computer program. However, it turns out that all compactness proofs of sufficiently concrete theorems can be effectivized. This can be made precise in the framework of reverse mathematics. The latter allows us to compare the logical strength of axioms and theorems about finite and countable objects (which are coded by elements and subsets of~$\mathbb N$). We note that the countable objects include, e.\,g., continuous functions on~$\mathbb R$, which are determined by their countably many values on the rationals. Most often, the comparisons of reverse mathematics take place over a basic axiom system~$\mathsf{RCA}_0$, which may be identified with effective mathematics (\emph{cum grano salis}). The system~$\mathsf{WKL}_0$ results from~$\mathsf{RCA}_0$ when we add weak K\H{o}nig's lemma as an axiom. For the purpose at hand, a statement is sufficiently concrete if it is~$\Pi^1_1$, i.\,e., of the form~$\forall X\subseteq\mathbb N:\psi(X)$ for arithmetical~$\psi$ (which means that the quantifiers in~$\psi$ may range over elements but not over subsets of~$\mathbb N$). Now the claim that compactness proofs can be effectivized is made precise by a celebrated result of L.~Harrington, which says that $\mathsf{WKL}_0$ is $\Pi^1_1$-conservative over (i.\,e., proves the same $\Pi^1_1$-statements as) the system~$\mathsf{RCA}_0$. One even has conservativity for statements of the form $\forall X\exists!Y\,\psi(X,Y)$ with arithmetical~$\psi$ (partially even for~$\psi\in\Pi^1_1$), where $\exists!$ denotes unique existence, as shown by S.~Simpson, K.~Tanaka and T.~Yamazaki~\cite{simpson-tanaka-yamazaki} (previously for~$\psi\in\Sigma^0_3$ by A.~Fernandes~\cite{fernandes-wkl}). Similar phenomena are important in proof mining~\cite{kohlenbach-book}, an approach that uses methods from logic to extract concrete information from \emph{a~priori} ineffective proofs.

The present paper proves conservativity results that extend the one by Harrington and are somewhat orthogonal to the one by Simpson, Tanaka and Yamazaki. In Section~\ref{sect:wo-princ}, we show that $\mathsf{WKL}_0$ is conservative over~$\mathsf{RCA}_0$ for statements that have the form
\begin{equation}\label{eq:lin-Pi11}
    \forall X\big(\text{``$X$ is a well-order"}\to\psi(X)\big)\quad\text{with}\quad\psi\in\Pi^1_1.
\end{equation}
We also prove a conservativity result for instances of the ascending descending sequence principle (see Corollary~\ref{cor:ads}). After the present paper was completed, the authors became aware that a `pointwise' version of conservativity for statements of the form~(\ref{eq:lin-Pi11}) was given by A.~Kreuzer and K.~Yokoyama~\cite{kreuzer-yokoyama} (who label this pointwise version as folklore). Namely, Theorem~3.2 of~\cite{kreuzer-yokoyama} says that the axiom system $\mathsf{WKL}_0+\text{``$X$ is a well-order"}$ is $\Pi^1_1$-conservative over $\mathsf{RCA}_0+\text{``$X$ is a well-order"}$ for each primitive recursive~$X$.

Since well-foundedness is a $\Pi^1_1$-property, statement (\ref{eq:lin-Pi11}) has complexity $\Pi^1_2$ (i.\,e., can be written as $\forall X\exists Y\psi$ with arithmetical~$\psi$). An important special case is provided by so-called well-ordering principles. These are statements of the form
\begin{equation}\label{eq:wo-princ}
    \forall X\big(\text{``$X$ is a well-order"}\to\text{``$D(X)$ is a well-order"}\big),
\end{equation}
where~$D$ is a computable transformation of linear orders. As an example, we mention the case where~$D(X)$ consists of the finite decreasing sequences in~$X$, ordered lexicographically (think of Cantor normal forms with exponents from~$X$). In this case, (\ref{eq:wo-princ}) is equivalent to arithmetical comprehension over~$\mathsf{RCA}_0$ (and hence unprovable in~$\mathsf{WKL}_0$), as shown by J.-Y.~Girard~\cite{girard87} and J.~Hirst~\cite{hirst94}. Many important principles above arithmetical comprehension have also been characterized by well-ordering principles (see~\cite{marcone-montalban,rathjen-afshari,rathjen-weiermann-atr,rathjen-atr,rathjen-model-bi,thomson-rathjen-Pi-1-1,freund-computable,freund-rathjen-iterated-Pi11}). Indeed, any $\Pi^1_2$-statement is equivalent to one of the form~(\ref{eq:wo-princ}) in the presence of arithmetical comprehension. Here one can even demand that~$D$ belongs to a class of particularly uniform well-ordering principles that are known as dilators (see Appendix~8.E of~\cite{girard-book-part2}). Well-ordering principles have found applications, e.\,g., in the reverse mathematics of Fra\"iss\'e's conjecture~\cite{marcone-montalban-fraisse} and related better-quasi-orders~\cite{freund-3-bqo}. Below arithmetical comprehension (and in particular in the so-called reverse mathematics zoo), $\Pi^0_2$-induction seems to be the only statement for which a characterization by a well-ordering principle is known~\cite{uftring-etr}.

Our conservativity result entails, in particular, that weak K\H{o}nig's lemma cannot be characterized by a well-ordering principle. This was first shown by the second author (see Corollary~4.1.20 of~\cite{uftring-thesis}). The first author then used completely different methods to show that many principles from the zoo cannot be characterized by dilators~\cite{freund-zoo}. Concerning~(\ref{eq:lin-Pi11}), we note that the linearity of~$X$ is crucial. As we show in Section~\ref{sect:wo-princ}, the conservativity result becomes false when one replaces ``$X$~is a well-order" by ``$X$ is a well-founded relation" or ``$X$ is a well-partial-order". This is interesting insofar as well-partial-orders seem close to being linear (their antichains are finite). For different reasons, though, weak K\H{o}nig's lemma can still not be characterized by a dilator on well-partial-orders (see Proposition~5.3.6 of~\cite{uftring-thesis}).

In Section~\ref{sect:isolated}, we prove results about the isolated existence quantifier that is explained by
\begin{equation*}
    \exists^iY\,\varphi(Y)\quad\Leftrightarrow\quad\exists Y\big(\varphi(Y)\land\exists n\forall Z(Y[n]=Z[n]\land\varphi(Z)\to Y=Z)\big),
\end{equation*}
where we write $Y[n]=\langle\chi_Y(0),\ldots,\chi_Y(n-1)\rangle$ for $Y\subseteq\mathbb N$ with characteristic function~$\chi_Y$. We show that $\mathsf{WKL}_0$ is conservative over~$\mathsf{RCA}_0$ for statements of the form $\forall X\exists^iY\,\varphi$ with arithmetical $\varphi$. As we will see, this readily follows from a technical theorem of Simpson, Tanaka and Yamazaki~\cite{simpson-tanaka-yamazaki}, though it is more general than the conservativity results that these authors state. We will obtain a new and considerably simpler proof for~$\varphi\in\Sigma^0_3$, which exploits the same idea as our result on well-ordering principles.

As an application, we consider the statement that all continuous~$f:[0,1]\to\mathbb R$ are bounded, which is equivalent to weak K\H{o}nig's lemma (i.\,e., to compactness) and hence unprovable in~$\mathsf{RCA}_0$ (see~\cite{simpson09}). As explained by Simpson, Tanaka and Yamazaki~\cite{simpson-tanaka-yamazaki}, this does not contradict conservativity for statements~$\forall X\exists!Y\psi$ (even when we make the bound unique by taking the supremum), since the assumption that $f$ is defined on all of~$[0,1]$ introduces an existential quantifier without a unique witness. We show that, nevertheless, our conservativity result for ``isolated existence" yields meaningful information: Consider a $\Sigma^1_1$-class~$\mathcal F$ of continuous functions $f:D_f\to\mathbb R$ such that $[0,1]\backslash D_f$ consists of isolated points, provably in~$\mathsf{RCA}_0$ (e.\,g., think of meromorphic functions). Then $\mathsf{RCA}_0$ proves that all~$f\in\mathcal F$ with $D_f=[0,1]$ are bounded. So even though the boundedness principle is intimately linked to compactness, the latter is in some sense dispensable for large classes of functions.

On a technical level, our results rely on the fact that countable models of~$\mathsf{RCA}_0$ have hyperimmune-free $\omega$-extensions that satisfy~$\mathsf{WKL}_0$. In fact, the forcing extensions that are constructed in typical proofs of Harrington's result are automatically hyperimmune-free. In order to make our paper more accessible, we present a proof of this fact in Section~\ref{sect:hyp-free}. Experts may skip this section if they are aware of the result, which is a straightforward variant of the hyperimmune-free basis theorem~\cite{jockusch-soare} over non-$\omega$-models.

\subsection*{Acknowledgements} We are grateful to Denis Hirschfeldt for advice and encouragement at an early stage of our project, to Ulrich Kohlenbach for important information about the literature, and to Leszek Ko{\l}odziejczyk and Keita Yokoyama for stimulating questions and comments.

\section{Hyperimmune-free $\omega$-extensions}\label{sect:hyp-free}

In this section, we show a version of the hyperimmune-free basis theorem for non-$\omega$-models. Our presentation of forcing follows Section~7 of~\cite{dzhafarov-mummert}. Experts can skip much of the section, which we aim to keep concise but reasonably self-contained.

When $\mathcal M$ is a model of second-order arithmetic, let~$M$ denote the first-order part and write $X\in\mathcal M$ to convey that~$X$ lies in the second-order part. By a function of~$\mathcal M$, we mean a function~$f:M\to M$ with $\{\langle x,y\rangle\,|\,f(x)=y\}\in\mathcal M$, where the Cantor codes~$\langle x,y\rangle$ are computed in~$\mathcal M$ (assuming that the latter satisfies basic arithmetic). For functions $f,g$ of~$\mathcal M$, we say that $g$ dominates~$f$ if we have $f(x)\leq^{\mathcal M}g(x)$ for all~$x\in M$. Let us recall that an $\omega$-extension of~$\mathcal M$ is a model~$\mathcal N$ with the same first-order part such that $X\in\mathcal M$ entails $X\in\mathcal N$. 

\begin{definition}
An $\omega$-extension of~$\mathcal M$ into~$\mathcal N$ is hyperimmune-free if each function of~$\mathcal N$ is dominated by some function of~$\mathcal M$.
\end{definition}

Note that domination is evaluated in the shared first-order part. We now recall some notation that is related to weak K\H{o}nig's lemma. Let $2^{<\omega}$ denote the set of finite sequences~$\sigma=\langle\sigma_0,\ldots,\sigma_{|\sigma|-1}\rangle$ with entries~$\sigma_i\in\{0,1\}$ (where the empty sequence arises for length $|\sigma|=0$). We write $\sigma\sqsubset\tau$ to express that $\sigma$ is a proper initial segment of~$\tau$, i.\,e., that we have $|\sigma|<|\tau|$ and $\sigma_i=\tau_i$ for all~$i<|\sigma|$. A subset~$T\subseteq 2^{<\omega}$ is called a tree if~$\sigma\sqsubset\tau\in T$ entails~$\sigma\in T$. Weak K\H{o}nig's lemma is the statement that any infinite tree~$T\subseteq 2^{<\omega}$ admits a path, i.\,e., a function~$f:\mathbb N\to\{0,1\}$ with the property that $f[n]=\langle f(0),\ldots,f(n-1)\rangle\in T$ holds for all~$n\in\mathbb N$.

For most of the present section, we fix a countable model~$\mathcal M\vDash\mathsf{RCA}_0$. Our aim is to find a hyperimmune-free $\omega$-extension into a model of~$\mathsf{WKL}_0$. This is achieved by a forcing construction that adds a path to one tree at a time. So let us also fix a~$T\in\mathcal M$ with
\begin{equation*}
\mathcal M\vDash\text{``$T\subseteq 2^{<\omega}$ is an infinite tree"}.
\end{equation*}
We employ Jockusch-Soare forcing as described in~\cite{dzhafarov-mummert}, though we tighten the presentation for the case at hand. Let $\mathbb P$ be the collection of all pairs $(n,U)$ with $n\in M$ and~$U\in\mathcal M$ such that we have
\begin{equation*}
\mathcal M\vDash\text{``$U\subseteq T$ is an infinite tree with a single sequence of length~$n$"}.
\end{equation*}
To get a partial order on~$\mathbb P$, we declare that $(m,U)\preceq(n,V)$ holds precisely when we have $m\geq^\mathcal M n$ and $U\subseteq V$. For $(n,U)\in\mathbb P$, we declare that $\mathbb V(n,U)\in U$ is the unique sequence with $\mathcal M\vDash|\mathbb V(n,U)|=n$. It is straightforward to see that we have
\begin{equation}\label{eq:force-prop1}
p\preceq q\text{ in }\mathbb P\quad\Rightarrow\quad\mathbb V(q)\sqsubseteq^\mathcal M\mathbb V(p),
\end{equation}
which is one of two properties that a forcing notion over a model should have according to Definition~7.6.1 of~\cite{dzhafarov-mummert}. The other property requires that
\begin{equation}\label{eq:force-prop2}
\text{all $q\in\mathbb P$ and $m\in M$ admit a $p\preceq q$ with $\mathcal M\vDash|\mathbb V(p)|\geq m$}.
\end{equation}
To show that this holds, we write $q=(n,V)$, where we may assume~$m>^\mathcal M n$. It~is enough to find a~$\sigma\in V$ with $\mathcal M\vDash|\sigma|=m$ such that the set
\begin{equation*}
U_\sigma=\{\tau\in V\,|\,\tau\sqsubseteq^{\mathcal M}\sigma\text{ or }\sigma\sqsubseteq^{\mathcal M}\tau\}
\end{equation*}
is $\mathcal M$-infinite, as we can then take~$p=(m,U_\sigma)\in\mathbb P$. If there was no such~$\sigma$, we would have
\begin{equation*}
\mathcal M\vDash\forall\sigma\in 2^m\exists k\forall\tau\in 2^{m+k}(\sigma\sqsubseteq\tau\to\tau\notin V),
\end{equation*}
where $2^l\subseteq 2^{<\omega}$ consists of the sequences of length~$l$. By bounded collection in~$\mathcal M$, we would then obtain a single~$k$ that works for all~$\sigma\in 2^m$, so that already~$V$ would be $\mathcal M$-finite, against the the assumption that~$q=(n,V)$ lies in~$\mathbb P$. To make the crucial point more explicit, we only need bounded collection since being finite is $\Sigma^0_1$ for subtrees of~$2^{<\omega}$ (``some level of the tree is empty") while it is~$\Sigma^0_2$ in general.

We write $\mathcal L_1^{\mathcal M}$ for the language of first-order arithmetic with (constants and predicate symbols for) number and set parameters from~$\mathcal M$. We also write $\mathcal M$ for the obvious~$\mathcal L_1^{\mathcal M}$-structure. Let $\mathcal L_1^{\mathcal M}(\mathsf G)$ be the extension of $\mathcal L_1^{\mathcal M}$ by a fresh predicate symbol~$\mathsf G$. We assume that $\mathcal L_1^{\mathcal M}(\mathsf G)$-formulas are built from atoms by negation and disjunction as well as bounded and unbounded existential quantification (so that $\exists x<t\,\psi$ is a proper formula and not an abbreviation for $\exists x\,\neg(x\geq t\lor\neg\psi)$). The following definition coincides with one from~\cite{dzhafarov-mummert}.

\begin{definition}\label{def:forcing-rel}
By recursion over the height of formulas, we declare that the forcing relation $p\Vdash\psi$ between a condition $p\in\mathbb P$ and an $\mathcal L_1^{\mathcal M}(\mathsf G)$-sentence~$\psi$ holds precisely when one of the following applies:
\begin{enumerate}[label=(\roman*)]
\item $\psi$ is an atom of~$\mathcal L^{\mathcal M}_1$ and true in~$\mathcal M$,
\item $\psi=\mathsf Gt$ and $\mathcal M\vDash t<|\mathbb V(p)|\land\mathbb V(p)_t=1$ (where $\mathbb V(p)_t$ is the $t$-th entry of the coded sequence~$\mathbb V(p)$),
\item $\psi=\psi_0\lor\psi_1$ and $p\Vdash\psi_i$ for some~$i<2$,
\item $\psi=\exists x<t\,\varphi(x)$ and $p\Vdash\varphi(n)$ for some~$n\in M$ with $\mathcal M\vDash n<t$,
\item $\psi=\exists x\,\varphi(x)$ and $p\Vdash\varphi(n)$ for some~$n\in M$,
\item $\psi=\neg\varphi$ and $q\nVdash\varphi$ for all~$q\preceq p$.
\end{enumerate}
One says that $p$ decides~$\psi$ if we have $p\Vdash\psi$ or $p\Vdash\neg\psi$.
\end{definition}

It may be instructive to observe that the forcing relation has a certain preference for negative information. Indeed, a condition forces $\exists x\,\psi(x)$ only when a witness `forces' it to do so, while it forces $\neg\exists x\,\psi(x)$ whenever this is compatible with the following monotonicity property.

\begin{lemma}\label{lem:forcing-monotone}
Given $p\Vdash\psi$ and $q\preceq p$, we get $q\Vdash\psi$.
\end{lemma}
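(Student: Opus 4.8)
The plan is to argue by induction on the height of the $\mathcal{L}_1^{\mathcal{M}}(\mathsf{G})$-sentence $\psi$, treating each of the six clauses of Definition~\ref{def:forcing-rel} in turn and using only the basic order-theoretic facts that $\preceq$ is transitive and that $p \preceq q$ in $\mathbb{P}$ implies $\mathbb{V}(q) \sqsubseteq^{\mathcal{M}} \mathbb{V}(p)$, i.e.\ property~(\ref{eq:force-prop1}).

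\medskip

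First I would dispatch the atomic cases. If $\psi$ is an atom of $\mathcal{L}_1^{\mathcal{M}}$, then $p \Vdash \psi$ means $\psi$ is true in $\mathcal{M}$, a condition that does not mention $p$ at all, so it is trivially inherited by $q$. If $\psi = \mathsf{G}t$, then $p \Vdash \psi$ says $\mathcal{M} \vDash t < |\mathbb{V}(p)| \wedge \mathbb{V}(p)_t = 1$; since $q \preceq p$ gives $\mathbb{V}(p) \sqsubseteq^{\mathcal{M}} \mathbb{V}(q)$ by~(\ref{eq:force-prop1}), the sequence $\mathbb{V}(q)$ extends $\mathbb{V}(p)$, so it is longer than $t$ and has the same $t$-th entry, hence $q \Vdash \mathsf{G}t$. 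The two disjunctive and existential clauses (iii), (iv), (v) are then immediate from the induction hypothesis: a witness $i < 2$, or a witness $n \in M$ (bounded by $t$ in case (iv)), that works for $p$ works for $q$ once we know $p \Vdash \varphi(\cdot) \Rightarrow q \Vdash \varphi(\cdot)$ for the shorter subformula.

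\medskip

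The only clause that behaves differently is the negation clause~(vi), and this is where one must be slightly careful (though it is still routine). Here $p \Vdash \neg\varphi$ means $r \nVdash \varphi$ for \emph{all} $r \preceq p$. Given $q \preceq p$, any $r \preceq q$ satisfies $r \preceq p$ by transitivity of $\preceq$, hence $r \nVdash \varphi$; therefore $q \Vdash \neg\varphi$. Note that, in contrast to the positive clauses, this step does not invoke the induction hypothesis at all — it is purely a consequence of transitivity of the partial order, which is why the lemma goes through uniformly. I would present the whole argument as a short formal induction, remarking that~(\ref{eq:force-prop1}) is needed only for the $\mathsf{G}t$ case and transitivity only for the negation case, and that every other case is a verbatim transfer of the witness or truth-value from $p$ to $q$. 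There is no real obstacle; the mild subtlety to flag is simply that the negation case must not be attempted via the induction hypothesis but via transitivity, since $\nVdash$ is not a statement to which an inductive monotonicity hypothesis directly applies.
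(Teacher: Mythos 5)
Your proof is correct and follows essentially the same route as the paper: induction on the height of $\psi$, with property~(\ref{eq:force-prop1}) handling the $\mathsf{G}t$ case and transitivity of $\preceq$ handling the negation case, the rest being immediate. Your remark that the negation case bypasses the induction hypothesis entirely is accurate and a nice point to flag, though the paper leaves it implicit.
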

\begin{proof}
One argues by induction over the height of~$\psi$. For $\psi=\mathsf Gt$, the claim follows from property~(\ref{eq:force-prop1}) of the forcing relation. When we have $p\Vdash\neg\psi$ and $r\preceq q$, we get $r\preceq p$ and hence $r\nVdash\psi$, as needed for~$q\Vdash\neg\psi$. In the other cases, the induction step is immediate.
\end{proof}

The set of sentences that are forced by a fixed condition is not closed under logical equivalence (e.\,g., because the condition will not force all tautologies~$\mathsf Gt\lor\neg\mathsf Gt$). We now show that this is remedied when we consider suitable sets of conditions. Recall that a non-empty set $F\subseteq\mathbb P$ is a filter if $p\succeq q\in F$ entails $p\in F$ and every two elements $p,q\in F$ admit a common bound $r\preceq p,q$ with $r\in F$. We do not demand $F\neq\mathbb P$, though this holds in non-trivial cases.

\begin{definition}
A filter~$G\subseteq\mathbb P$ is generic if each $\mathcal L^{\mathcal M}_1(\mathsf G)$-sentence is decided by some condition in~$G$ and each~$n\in M$ admits a $p\in G$ with $\mathcal M\vDash|\mathbb V(p)|\geq n$.
\end{definition}

The notation is justified since~$G$ will provide a meaningful interpretation for~$\mathsf G$ (but note the different font). As one can see from~\cite{dzhafarov-mummert}, it suffices for our application that $G$ decides all $\Sigma^0_2$-sentences, but we see no reason to think about formula complexity at this point.

\begin{proposition}\label{prop:generics-exist}
Given that the model~$\mathcal M$ is countable, each condition~$p\in\mathbb P$ admits a generic filter~$G\subseteq\mathbb P$ with $p\in G$.
\end{proposition}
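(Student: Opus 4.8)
The plan is to run a textbook meet-the-requirements argument, using the countability of~$\mathcal M$ to list everything a generic filter must accomplish. First I would note that since~$\mathcal M$ is countable, adjoining constants and predicate symbols for the number and set parameters of~$\mathcal M$ keeps the language countable, so there are only countably many $\mathcal L_1^{\mathcal M}(\mathsf G)$-sentences; likewise $M$ is countable. This lets me fix one enumeration $(R_i)_{i\in\mathbb N}$ of all requirements of two kinds: for each $\mathcal L_1^{\mathcal M}(\mathsf G)$-sentence~$\psi$, the requirement ``$\psi$ is decided''; and for each~$n\in M$, the requirement ``$|\mathbb V(\cdot)|\geq n$''.

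Next I would build a $\preceq$-descending sequence of conditions $p=p_0\succeq p_1\succeq\cdots$ in~$\mathbb P$ so that $p_{i+1}$ meets~$R_i$, and then take~$G$ to be the upward closure $\{q\in\mathbb P:p_i\preceq q\text{ for some }i\}$. For a requirement of the second kind, the extension step is exactly property~(\ref{eq:force-prop2}), which is already available. For a requirement ``decide~$\psi$'', the key observation --- and really the only substantive point --- is that this needs no density lemma beyond~(\ref{eq:force-prop2}): either some~$p\preceq p_i$ satisfies $p\Vdash\psi$, and I set $p_{i+1}=p$; or no extension of~$p_i$ forces~$\psi$, which by clause~(vi) of Definition~\ref{def:forcing-rel} is precisely the assertion $p_i\Vdash\neg\psi$ (note that $\neg\psi$ is again an $\mathcal L_1^{\mathcal M}(\mathsf G)$-sentence, so the clause applies), and I set $p_{i+1}=p_i$.

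It then remains to check that~$G$ is a generic filter containing~$p$. We have $p=p_0\in G$; the set~$G$ is nonempty and upward closed by construction; and it is directed because, given $q_0,q_1\in G$ with $p_{i_0}\preceq q_0$ and $p_{i_1}\preceq q_1$, the condition $p_{\max(i_0,i_1)}$ lies in~$G$ and, by transitivity along the descending chain, below both~$q_0$ and~$q_1$. Genericity is immediate: every sentence~$\psi$ is decided by the~$p_{i+1}$ handling its requirement, and every~$n\in M$ is served by the~$p_{i+1}$ handling the corresponding requirement, so all witnessing conditions lie in~$G$.

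I do not anticipate a genuine obstacle: once property~(\ref{eq:force-prop2}) is in hand, the proposition is pure bookkeeping. The two things worth stating carefully are the countability bookkeeping --- the parameter-enriched language stays countable --- and the observation that the decision step for a sentence comes for free from the way clause~(vi) builds negative forcing, so that ``decision density'' does not require a separate argument.
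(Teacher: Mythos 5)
Your proposal is correct and follows essentially the same route as the paper: build a $\preceq$-descending chain from $p$, use clause (vi) to decide each sentence for free and property~(\ref{eq:force-prop2}) to grow $|\mathbb V(\cdot)|$, and take $G$ to be the upward closure. The only cosmetic difference is that you merge both kinds of requirements into a single enumeration whereas the paper interleaves two parallel enumerations at each recursion step.
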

\begin{proof}
Fix an enumeration $n_0,n_1,\ldots$ of~$M$ (not in increasing order) and an enumeration $\psi_0,\psi_1,\ldots$ of all $\mathcal L^{\mathcal M}_1(\mathsf G)$-sentences. We set~$p_0=p$ and recursively assume that~$p_i$ is given. If possible, we pick a condition~$p_i'\preceq p_i$ that forces~$\psi_i$. If not, we set $p_i'=p_i$ and note that this condition forces~$\neg\psi_i$ (by clause~(vi) of Definition~\ref{def:forcing-rel}). So in either case, $p_i'\preceq p_i$ decides~$\psi_i$. Due to property~(\ref{eq:force-prop2}) of the forcing relation, we may pick a condition~$p_{i+1}\preceq p_i'$ with $\mathcal M\vDash|\mathbb V(p_{i+1})|\geq n_i$. Now the set
\begin{equation*}
G=\{q\in\mathbb P\,|\,p_i\preceq q\text{ for some }i\in\mathbb N\}
\end{equation*}
has the desired properties, as one readily verifies.
\end{proof}

In the view of the authors, the argument is particularly transparent if we first define the forcing extension as a first-order structure (somewhat deviating from~\cite{dzhafarov-mummert}).

\begin{definition}\label{def:G-bar}
For a generic filter~$G\subseteq\mathbb P$, we set
\begin{equation*}
\bar G=\{n\in M\,|\,\text{there is a $p\in G$ with $\mathcal M\vDash n<|\mathbb V(p)|\land\mathbb V(p)_n=1$}\}.
\end{equation*}
We then define $\mathcal M_G$ as the $\mathcal L^{\mathcal M}_1(\mathsf G)$-structure that extends the $\mathcal L^{\mathcal M}_1$-structure~$\mathcal M$ by interpreting~$\mathsf G$ as $\bar G$.
\end{definition}

We will need the following equivalent characterization.

\begin{lemma}\label{lem:G-bar}
When~$G\subseteq\mathbb P$ is a generic filter, we have
\begin{equation*}
\bar G=\{n\in M\,|\,\text{all $p\in G$ validate $\mathcal M\vDash n<|\mathbb V(p)|\to\mathbb V(p)_n=1$}\}.
\end{equation*}
\end{lemma}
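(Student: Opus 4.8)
The plan is to prove the two inclusions between the set $\bar G$ and the set
$$
S=\{n\in M\,|\,\text{all $p\in G$ validate $\mathcal M\vDash n<|\mathbb V(p)|\to\mathbb V(p)_n=1$}\}
$$
separately. The inclusion $\bar G\subseteq S$ is the easy direction and uses the monotonicity of the sequences $\mathbb V(p)$ under $\preceq$, that is, property~(\ref{eq:force-prop1}). Suppose $n\in\bar G$, witnessed by some $p_0\in G$ with $\mathcal M\vDash n<|\mathbb V(p_0)|\land\mathbb V(p_0)_n=1$. Given an arbitrary $q\in G$ with $\mathcal M\vDash n<|\mathbb V(q)|$, I use that $G$ is a filter to pick a common bound $r\in G$ with $r\preceq p_0$ and $r\preceq q$. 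By~(\ref{eq:force-prop1}) we have $\mathbb V(p_0)\sqsubseteq^{\mathcal M}\mathbb V(r)$ and $\mathbb V(q)\sqsubseteq^{\mathcal M}\mathbb V(r)$, so all three sequences agree on their common initial segment; in particular $\mathbb V(q)_n=\mathbb V(r)_n=\mathbb V(p_0)_n=1$, which is what $n\in S$ requires.

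For the reverse inclusion $S\subseteq\bar G$, the point is that a generic filter contains conditions with arbitrarily long $\mathbb V(p)$. So let $n\in S$. By genericity there is some $p\in G$ with $\mathcal M\vDash|\mathbb V(p)|\geq n+1$, hence $\mathcal M\vDash n<|\mathbb V(p)|$. Since $n\in S$, this very $p$ satisfies $\mathcal M\vDash\mathbb V(p)_n=1$, and therefore $p$ witnesses $n\in\bar G$ by the definition of $\bar G$ in Definition~\ref{def:G-bar}. This completes both inclusions. (One should be mildly careful that $n+1$ is computed in $\mathcal M$, but since $\mathcal M$ satisfies basic arithmetic this is harmless; the defining clause of genericity gives a $p\in G$ with $\mathcal M\vDash|\mathbb V(p)|\geq m$ for every $m\in M$, and we apply it to $m=n+1$.)

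I expect no real obstacle here; the lemma is essentially unwinding the definitions, and the only thing that needs a moment's thought is why the two descriptions of membership — ``some $p\in G$ puts a $1$ at position $n$'' versus ``no $p\in G$ puts a $0$ at position $n$'' — coincide, which is exactly the consistency built into $G$ being a filter together with the length-unboundedness built into $G$ being generic. The filter property rules out that one condition in $G$ says $\mathbb V(p)_n=1$ while another says $\mathbb V(q)_n=0$, and the generic length condition guarantees that position $n$ is actually decided by \emph{some} condition in $G$, so the ``for all'' and ``there exists'' formulations cannot come apart.
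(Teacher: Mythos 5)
Your proposal is correct and matches the paper's own proof: the inclusion of the right-hand set into $\bar G$ uses genericity to find a condition of length exceeding $n$, and the inclusion of $\bar G$ into the right-hand set uses the filter property together with property~(\ref{eq:force-prop1}) to see that all conditions in $G$ agree at position~$n$. No further comment is needed.
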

\begin{proof}
To see that the right set is included in the left, it suffices to note that any~$n\in M$ admits a $p\in G$ with $\mathcal M\vDash n<|\mathbb V(p)|$, due to the assumption that~$G$ is generic. For the converse inclusion, assume that $p\in G$ witnesses $n\in\bar G$ according to Definition~\ref{def:G-bar}. Given an arbitrary~$q\in G$ with $\mathcal M\vDash n<|\mathbb V(q)|$, we need to show that we have $\mathcal M\vDash\mathbb V(q)_n=1$. As~$G$ is a filter, there is an $r\in G$ with $r\preceq p,q$. Due to~(\ref{eq:force-prop1}), we get $\mathbb V(p),\mathbb V(q)\sqsubseteq\mathbb V(r)$  and thus $\mathbb V(q)_n=\mathbb V(r)_n=\mathbb V(p)_n=1$ in~$\mathcal M$.
\end{proof}

The following is at the heart of the forcing contruction.

\begin{proposition}\label{prop:all-forced}
For any generic filter~$G\subseteq\mathbb P$ and $\mathcal L^{\mathcal M}_1(\mathsf G)$-sentence~$\psi$, we have
\begin{equation*}
\mathcal M_G\vDash\psi\quad\Leftrightarrow\quad\text{there is a $p\in G$ with $p\Vdash\psi$}.
\end{equation*}
\end{proposition}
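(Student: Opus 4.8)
The plan is to prove both directions simultaneously by induction on the height of the $\mathcal L^{\mathcal M}_1(\mathsf G)$-sentence~$\psi$, following the clauses of Definition~\ref{def:forcing-rel}. The atomic and positive cases are routine and do not use genericity. For an atom~$\psi$ of~$\mathcal L^{\mathcal M}_1$, note that $\mathcal M_G$ and~$\mathcal M$ agree on~$\psi$ (as $\mathcal M_G$ merely interprets the new symbol~$\mathsf G$) and that, by clause~(i), $p\Vdash\psi$ holds iff $\psi$ is true in~$\mathcal M$, independently of~$p$; since $G$ is a non-empty filter, the existence of some $p\in G$ with $p\Vdash\psi$ is therefore equivalent to $\mathcal M_G\vDash\psi$. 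For $\psi=\mathsf{G}t$, writing $n$ for the value of the ($\mathsf G$-free) term~$t$ in~$\mathcal M$, clause~(ii) says that $p\Vdash\mathsf{G}t$ holds exactly when $p$ witnesses $n\in\bar G$ in the sense of Definition~\ref{def:G-bar}, so the existence of such a $p\in G$ is by definition the same as $n\in\bar G$, i.e.\ as $\mathcal M_G\vDash\mathsf{G}t$. For $\psi=\psi_0\lor\psi_1$ and for $\psi=\exists x<t\,\varphi(x)$ or $\psi=\exists x\,\varphi(x)$, both directions follow from the induction hypothesis applied to the immediate subformulas, using that $\varphi(n)$ has the same height as~$\varphi(x)$ for $n\in M$ and that a bound $n<t$ is absolute between~$\mathcal M$ and~$\mathcal M_G$.

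The case $\psi=\neg\varphi$ is where the real content lies, and I expect it to be the main obstacle; it is the only place that invokes the filter property and genericity. For the implication from forcing to truth, suppose $p\in G$ with $p\Vdash\neg\varphi$ and assume, towards a contradiction, that $\mathcal M_G\vDash\varphi$. By the induction hypothesis for~$\varphi$ there is $q\in G$ with $q\Vdash\varphi$; since $G$ is a filter we may pick $r\in G$ with $r\preceq p,q$, and then $r\Vdash\varphi$ by Lemma~\ref{lem:forcing-monotone}. But clause~(vi), together with $p\Vdash\neg\varphi$ and $r\preceq p$, gives $r\nVdash\varphi$, a contradiction. For the converse, suppose $\mathcal M_G\nvDash\varphi$. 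Since $G$ is generic, some $p\in G$ decides~$\varphi$; this $p$ cannot force~$\varphi$, for then the induction hypothesis for~$\varphi$ would give $\mathcal M_G\vDash\varphi$; hence $p\Vdash\neg\varphi$, as required.

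This completes the induction. In summary, the only substantive ingredients are the ones we already have: genericity, used to obtain a condition in~$G$ that decides the subformula~$\varphi$, and the filter property together with Lemma~\ref{lem:forcing-monotone}, used to amalgamate two conditions of~$G$ into one that forces~$\varphi$. Everything else is bookkeeping of formula height under substitution of parameters and passage to immediate subformulas.
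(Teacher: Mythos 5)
Your proof is correct and follows essentially the same route as the paper: induction on formula height, with the negation case handled via genericity (to find a deciding condition) and the filter property together with Lemma~\ref{lem:forcing-monotone} (to amalgamate a condition forcing $\varphi$ with one forcing $\neg\varphi$). You spell out the atomic and positive clauses that the paper dismisses as routine, but the substantive argument is the same.
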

\begin{proof}
We use induction over the height of~$\psi$. For $\psi=\mathsf Gt$, we have $\mathcal M_G\vDash\psi$~precisely if~$t^{\mathcal M}$ lies in~$\bar G$, i.\,e., if there is a $p\in G$ with $\mathcal M\vDash t<|\mathbb V(p)|\land\mathbb V(p)_t=1$. The latter is equivalent to~$p\Vdash\psi$ by definition of the forcing relation. In the only other interesting case, we have~$\psi=\neg\varphi$. First assume~$\mathcal M_G\vDash\neg\varphi$. By induction hypothesis, we have $q\not\Vdash\varphi$ for all~$q\in G$. Since~$G$ is generic, we find a $p\in G$ that decides~$\varphi$. But then we can only have $p\Vdash\neg\varphi$. For the converse, we assume~$\mathcal M_G\nvDash\neg\varphi$. By induction hypothesis, we find a~$q\in G$ with $q\Vdash\varphi$. Towards a contradiction, assume that $p\Vdash\neg\varphi$ holds for some~$p\in G$. As $G$ is a filter, we find an $r\in G$ that validates $r\leq p,q$. By Lemma~\ref{lem:forcing-monotone} we get $r\Vdash\varphi$, which contradicts~$p\Vdash\neg\varphi$.
\end{proof}

A central property of the forcing relation is that it can be defined in the ground model. We need this fact in the following form. For the elementary but somewhat intricate proof, we refer the reader to~\cite{dzhafarov-mummert}, where the result is shown as~Lemma~7.7.4.

\begin{lemma}\label{lem:forcing-definable}
For any bounded $\mathcal L^{\mathcal M}_1(\mathsf G)$-formula~$\theta(\mathbf x)$ with a tuple~$\mathbf x$ of free variables, there is a bounded $\mathcal L^{\mathcal M}_1$-formula~$\theta'(\mathbf x,z)$ such that all $\mathbf n$ from~$M$ validate:
\begin{enumerate}[label=(\roman*)]
\item If we have $p\Vdash\theta(\mathbf n)$, there is a $q\preceq p$ with $\mathcal M\vDash\theta'(\mathbf n,\mathbb V(q))$.
\item Given $\mathcal M\vDash\theta'(\mathbf n,\mathbb V(p))$ with $p\in\mathbb P$, we get $p\Vdash\theta(\mathbf n)$.
\item We have $\mathcal M\vDash\theta'(\mathbf n,\sigma)\land\sigma\sqsubseteq\tau\in 2^{<\omega}\to\theta'(\mathbf n,\tau)$.
\end{enumerate}
\end{lemma}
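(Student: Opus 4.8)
The plan is to prove Lemma~\ref{lem:forcing-definable} by induction on the structure of the bounded $\mathcal L^{\mathcal M}_1(\mathsf G)$-formula $\theta(\mathbf x)$, simultaneously constructing the witnessing bounded $\mathcal L^{\mathcal M}_1$-formula $\theta'(\mathbf x,z)$ and verifying clauses~(i)--(iii). The key point is that the forcing relation for bounded formulas, when restricted to conditions with a fixed stem $\sigma=\mathbb V(p)$, depends only on $\sigma$ and not on the rest of $p$; this is why a single set variable $z$ (ranging over $2^{<\omega}$ in $\mathcal M$) suffices to capture all the relevant information about the condition.

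First I would handle the atomic cases. For an atom $\theta$ of $\mathcal L^{\mathcal M}_1$ (no occurrence of $\mathsf G$), set $\theta'(z)$ to be $\theta\land z\in 2^{<\omega}$; then clauses~(i)--(iii) are immediate from clause~(i) of Definition~\ref{def:forcing-rel} together with property~(\ref{eq:force-prop2}) to produce the required $q\preceq p$. For $\theta=\mathsf Gt$, set $\theta'(z)$ to be ``$z\in 2^{<\omega}\land t<|z|\land z_t=1$''; clause~(ii) is exactly clause~(ii) of Definition~\ref{def:forcing-rel}, clause~(i) again uses~(\ref{eq:force-prop2}) to extend to a condition of sufficient length (using that $U_\sigma$-type shrinking preserves the stem), and clause~(iii) holds because extending a binary string does not change its earlier entries. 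Next, for $\theta=\theta_0\lor\theta_1$, put $\theta'=\theta_0'\lor\theta_1'$; all three clauses follow directly from the induction hypothesis and clause~(iii) of Definition~\ref{def:forcing-rel}. For bounded quantification $\theta=\exists x<t\,\varphi(x)$, put $\theta'(z)=\exists x<t\,\varphi'(x,z)$ — this is again a \emph{bounded} $\mathcal L^{\mathcal M}_1$-formula — and the three clauses transfer from the induction hypothesis for $\varphi$, using clause~(iv) of Definition~\ref{def:forcing-rel}.

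The genuinely delicate case is negation, $\theta=\neg\varphi$, since $p\Vdash\neg\varphi$ quantifies universally over all $q\preceq p$, and we must express this with a \emph{bounded} formula in the ground model. Here is where clause~(iii) of the induction hypothesis for $\varphi$ does the work: $p\Vdash\neg\varphi$ means no $q\preceq p$ forces $\varphi$, which by clauses~(i) and~(ii) for $\varphi$ is equivalent to: no $q\preceq p$ satisfies $\mathcal M\vDash\varphi'(\mathbf n,\mathbb V(q))$. Using property~(\ref{eq:force-prop2}) and the monotonicity clause~(iii) for $\varphi'$, this in turn is equivalent to: for every binary string $\sigma$ in $\mathcal M$ with $\mathbb V(p)\sqsubseteq\sigma$ that lies in the underlying tree component of $p$, we have $\neg\varphi'(\mathbf n,\sigma)$; but since $\varphi'$ is monotone and $\mathbb V(p)\sqsubseteq\sigma$ already forces $\varphi$ as soon as $\varphi'(\mathbf n,\mathbb V(p))$ holds (by clause~(ii)), the relevant content reduces to $\neg\varphi'(\mathbf n,\mathbb V(p))$ itself. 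Thus I would set $\theta'(z)=z\in 2^{<\omega}\land\neg\varphi'(\mathbf n,z)$, which is bounded, and monotonicity (clause~(iii) for $\theta'$) is inherited from the contrapositive of clause~(iii) for $\varphi'$ — this is precisely why clause~(iii) must be carried through the induction even though it is not needed in the final application. Verifying clauses~(i) and~(ii) for $\theta'$ then amounts to unwinding clause~(vi) of Definition~\ref{def:forcing-rel} against the just-described equivalences, again invoking~(\ref{eq:force-prop2}) to manufacture the condition $q\preceq p$ in clause~(i).

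I expect the main obstacle to be bookkeeping rather than conceptual: one must be careful that at each step $\theta'$ remains genuinely bounded (in particular the negation step must not introduce an unbounded quantifier over conditions — the reduction to $\neg\varphi'(\mathbf n,z)$ is what prevents this), and one must keep clause~(iii) synchronized with the other two throughout, since it is the load-bearing invariant for the negation case. Since the excerpt explicitly defers the full argument to Lemma~7.7.4 of~\cite{dzhafarov-mummert}, I would present the induction in outline and cite that reference for the routine verifications, emphasizing only the negation step and the role of monotonicity.
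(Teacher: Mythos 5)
The paper itself does not prove this lemma (it defers to Lemma~7.7.4 of Dzhafarov--Mummert), so the question is only whether your sketch is sound. It is not: the negation case, which you correctly identify as the delicate one, is handled incorrectly. Setting $\theta'(\mathbf n,z)=\neg\varphi'(\mathbf n,z)$ breaks both clause~(ii) and clause~(iii). For clause~(iii), the contrapositive of ``$\varphi'(\mathbf n,\sigma)\land\sigma\sqsubseteq\tau\to\varphi'(\mathbf n,\tau)$'' is downward persistence of $\neg\varphi'$ (from longer strings to shorter ones), which is the \emph{opposite} of what clause~(iii) demands; so monotonicity is not inherited. For clause~(ii), take the concrete case $\varphi=\mathsf Gt$ with $\varphi'(z)\equiv(t<|z|\land z_t=1)$, and a condition $p$ with $|\mathbb V(p)|\leq t$. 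Then $\mathcal M\vDash\neg\varphi'(\mathbb V(p))$ holds vacuously, but $p\Vdash\neg\mathsf Gt$ is generally false, since some $q\preceq p$ may have $\mathbb V(q)_t=1$. The underlying error is the claim that ``the relevant content reduces to $\neg\varphi'(\mathbf n,\mathbb V(p))$ itself'': whether some extension of $p$ forces $\varphi$ is \emph{not} determined by $\mathbb V(p)$ when the stem is too short, precisely because $\varphi'$ is only upward monotone.

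The missing idea is the one that makes boundedness of $\theta$ essential. Since $\theta$ is bounded, every term $s$ occurring in a subformula $\mathsf Gs$ of $\theta(\mathbf n)$ can be majorized by an $\mathcal L_1^{\mathcal M}$-term bound $b(\mathbf n)$ computed from the quantifier bounds. One then defines $\theta'(\mathbf n,z)$ as ``$z\in 2^{<\omega}\land|z|\geq b(\mathbf n)\land\theta^z(\mathbf n)$'', where $\theta^z$ replaces each atom $\mathsf Gs$ by $s<|z|\land z_s=1$. Clause~(iii) holds because $\theta^z$ only inspects $z$ below $b(\mathbf n)\leq|z|$; clause~(ii) holds because every $q\preceq p$ has $\mathbb V(p)\sqsubseteq\mathbb V(q)$, so the evaluation is frozen once $|\mathbb V(p)|\geq b(\mathbf n)$ and one checks by a subinduction that the forced value agrees with the evaluated value; clause~(i) follows by first extending $p$ via property~(\ref{eq:force-prop2}) to a $q$ with $|\mathbb V(q)|\geq b(\mathbf n)$. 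In this setup the negation case is unproblematic exactly because, above the length bound, the stem decides $\theta$ outright. Your atomic, disjunction and bounded-quantifier cases are fine, but the induction as you have organized it cannot be repaired locally at the negation step without introducing this length bound.
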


Crucially, we can derive that induction remains valid in generic extensions. For later reference, we present the following proof with more details than in~\cite{dzhafarov-mummert}.

\begin{proposition}\label{prop:force-induction}
When~$G\subseteq\mathbb P$ is a generic filter, $\mathcal M_G$ validates induction along~$\mathbb N$ whenever the induction formula is a $\Sigma_1$-formula of the language~$\mathcal L_1^{\mathcal M}(\mathsf G)$.
\end{proposition}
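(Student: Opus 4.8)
The plan is to run the standard argument for ``generic extensions preserve $\Sigma_1$-induction'' in two phases: first reduce the forcing of the given $\Sigma_1$-formula to an arithmetical statement about the ground model $\mathcal M$, and then transport $\Sigma_1$-induction from $\mathcal M$. Write the induction formula as $\varphi(x)\equiv\exists y\,\theta(x,y)$ with $\theta$ a bounded $\mathcal L_1^{\mathcal M}(\mathsf G)$-formula (parameters absorbed), and let $\theta'(x,y,z)$ be the bounded $\mathcal L_1^{\mathcal M}$-formula supplied by Lemma~\ref{lem:forcing-definable}. I set $\varphi'(x,z)\equiv\exists y\,\theta'(x,y,z)$; this is a $\Sigma_1$-formula of $\mathcal L_1^{\mathcal M}$, and by clause~(iii) of Lemma~\ref{lem:forcing-definable} it is monotone in its sequence argument, i.e. $\mathcal M\vDash\varphi'(x,\sigma)\wedge\sigma\sqsubseteq\tau\to\varphi'(x,\tau)$.

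The first step is the reduction
\begin{equation*}
    \mathcal M_G\vDash\varphi(n)\quad\Longleftrightarrow\quad\text{there is }p\in G\text{ with }\mathcal M\vDash\varphi'(n,\mathbb V(p)).
\end{equation*}
The direction from right to left is immediate from clause~(ii) of Lemma~\ref{lem:forcing-definable}, clause~(v) of Definition~\ref{def:forcing-rel}, and Proposition~\ref{prop:all-forced}. For the converse I would use Proposition~\ref{prop:all-forced} to get $p_0\in G$ and $m\in M$ with $p_0\Vdash\theta(n,m)$, and clause~(i) to obtain $q\preceq p_0$ with $\mathcal M\vDash\theta'(n,m,\mathbb V(q))$. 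This $q$ need not lie in $G$, so here I would use genericity: $G$ decides the $\mathcal L_1^{\mathcal M}(\mathsf G)$-sentence $\exists z\,(\text{``}z\text{ is an initial segment of }\mathsf G\text{''}\wedge\varphi'(n,z))$, a short argument with clauses~(ii),(iii) rules out that it is forced false (given $\mathcal M\vDash\theta'(n,m,\mathbb V(q))$), and then from the witness one passes to a genuine $p\in G$ using Lemma~\ref{lem:G-bar} (so that $\mathbb V(p)$ is an initial segment of $\mathsf G$ in $\mathcal M_G$ for every $p\in G$), the genericity clause producing conditions in $G$ with $\mathbb V(p)$ of arbitrary length, and the monotonicity of $\varphi'$. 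Consequently $\mathcal M_G\vDash\varphi(n)$ is equivalent to $\mathcal M_G\vDash\tilde\varphi(n)$, where $\tilde\varphi(x)\equiv\exists z\,(\text{``}z\text{ is an initial segment of }\mathsf G\text{''}\wedge\varphi'(x,z))$ is again $\Sigma_1$; so it suffices to prove the induction instance for $\tilde\varphi$.

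For the second step, assume $\mathcal M_G\vDash\tilde\varphi(0)\wedge\forall x(\tilde\varphi(x)\to\tilde\varphi(x+1))$ and, for contradiction, $\mathcal M_G\vDash\neg\tilde\varphi(a)$ for some $a\in M$. Using Proposition~\ref{prop:all-forced} and the filter property I would fix a single $p=(n_p,U_p)\in G$ forcing all three sentences. Unfolding Definition~\ref{def:forcing-rel} together with Lemma~\ref{lem:forcing-definable}, I would translate ``$p\Vdash\tilde\varphi(0)$'' and ``$p\Vdash\forall x(\tilde\varphi(x)\to\tilde\varphi(x+1))$'' into purely arithmetical information about the finite sequences lying in $U_p$, arriving at a $\Sigma_1$-formula $\psi(x)$ of $\mathcal L_1^{\mathcal M}$ such that (a) $\psi(0)$ holds, (b) $\psi(x)\to\psi(x+1)$ for all $x$ — this is where the forced induction hypothesis and the monotonicity of $\varphi'$ are used — and (c) $\psi(n)$ entails $\mathcal M_G\vDash\tilde\varphi(n)$. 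Then $\Sigma_1$-induction in $\mathcal M$ (available since $\mathcal M\vDash\mathsf{RCA}_0$) gives $\psi(n)$ for all $n$, hence $\mathcal M_G\vDash\tilde\varphi(a)$, contradicting $\mathcal M_G\vDash\neg\tilde\varphi(a)$. Thus $\mathcal M_G\vDash\forall x\,\tilde\varphi(x)$, and by the reduction $\mathcal M_G\vDash\forall x\,\varphi(x)$.

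The step I expect to be the main obstacle is keeping $\psi$ at the $\Sigma_1$-level. The naive way of saying ``some condition $q\preceq p$ forces $\tilde\varphi(n)$'' quantifies over conditions — hence over the second-order part of $\mathcal M$ — and moreover involves the $\Pi_1$-statement that the cone of a sequence in $U_p$ is $\mathcal M$-infinite; taken literally this makes the natural formula $\Sigma_2$, and $\mathsf{RCA}_0$ does not prove $\Sigma_2$-induction. The point of the monotonicity clause~(iii) of Lemma~\ref{lem:forcing-definable} is precisely to avoid this: it lets the forcing of $\tilde\varphi(n)$ be tracked by finite sequences alone — equivalently, by initial segments of the generic — so that any passage to honest conditions (and the attendant ``infinite subtree'' requirement) is funnelled through the refinement behind property~(\ref{eq:force-prop2}). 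Being disciplined about always invoking clause~(iii), rather than constructing subtrees by hand, is what will make the $\Sigma_1$-induction in $\mathcal M$ go through; verifying (b) and (c) precisely under this constraint is the technical heart of the argument.
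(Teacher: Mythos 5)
Your first phase (reducing $\mathcal M_G\vDash\varphi(n)$ to the existence of some $p\in G$ with $\mathcal M\vDash\varphi'(n,\mathbb V(p))$) can be made to work, but it is a detour the paper does not need, and the real problem is that your second phase is never carried out. You correctly diagnose that the obvious arithmetizations of ``$\tilde\varphi(n)$ is forced'' are not $\Sigma_1$ over $\mathcal M$, and you then assert that disciplined use of clause~(iii) of Lemma~\ref{lem:forcing-definable} repairs this, while deferring the verification of your properties (b) and (c) as ``the technical heart''. That heart is missing, and monotonicity alone does not supply it. If you take $\psi(x)$ to be $\exists\sigma(\sigma\in U_p\land\varphi'(x,\sigma))$, then (c) fails: a node $\sigma\in U_p$ satisfying $\varphi'$ need not be an initial segment of the generic, nor equal to $\mathbb V(p')$ for any $p'\in G$, so it yields no information about $\mathcal M_G$. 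If instead you restrict to $\sigma\sqsubseteq\mathbb V(p)$, then (c) holds but (b) fails: the forced induction step produces witnesses only along ever longer initial segments of the generic, which a single fixed condition $p$ cannot capture. The positive fact ``some condition in $G$ forces $\tilde\varphi(n)$'' simply does not localize to one condition, so no direct positive $\Sigma_1$ formula of the kind you postulate presents itself.

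The paper resolves this by dualizing. Fix $q=(n,U)\in G$ forcing $\psi(0)$ and the induction step, and for each $i$ put $U_i=\{\sigma\in U\mid\mathcal M\vDash\forall y\leq|\sigma|\,\neg\theta'(i,y,\sigma)\}$, which is a tree by clause~(iii). One then proves that $U_i$ is $\mathcal M$-infinite exactly when some $p\preceq q$ forces $\neg\psi(i)$: the \emph{negative} fact, unlike the positive one, is captured by a single subtree of $U$. The induction in $\mathcal M$ is run on the statement ``$U_i$ is $\mathcal M$-finite'', and the observation that keeps this at the $\Sigma_1$ level is that finiteness of a subtree of $2^{<\omega}$ is $\Sigma^0_1$ (``some level is empty'') --- the same point used to verify property~(\ref{eq:force-prop2}). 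The base case comes from $q\Vdash\psi(0)$, the step from the forced induction hypothesis, and the resulting fact that no $p\preceq q$ forces $\neg\psi(i)$ is converted into $\mathcal M_G\vDash\psi(i)$ by genericity, the filter property and Proposition~\ref{prop:all-forced}. You should replace your second phase with this argument.
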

\begin{proof}
Consider a $\Sigma_1$-formula~$\psi$ of $\mathcal L_1^{\mathcal M}(\mathsf G)$ such that we have
\begin{equation*}
\mathcal M_G\vDash\psi(0)\land\forall x\big(\psi(x)\to\psi(x+1)\big).
\end{equation*}
We write $\psi(x)=\exists y\,\theta(x,y)$ for a bounded formula~$\theta$. Let $\theta'(x,y,z)$ be a bounded $\mathcal L^{\mathcal M}_1$-formula as provided by the previous lemma. By Proposition~\ref{prop:all-forced} (in conjunction with Lemma~\ref{lem:forcing-monotone} and the fact that any two elements of the filter~$G$ have a common bound), we have a~$q=(n,U)\in G$ that validates
\begin{equation*}
q\Vdash\psi(0)\quad\text{and}\quad q\Vdash\neg\exists x\neg\big(\neg\psi(x)\lor\psi(x+1)\big).
\end{equation*}
For each $i\in M$, we consider
\begin{equation*}
U_i=\{\sigma\in U\,|\,\mathcal M\vDash\forall y\leq|\sigma|:\neg\theta'(i,y,\sigma)\}\in\mathcal M,
\end{equation*}
which is a tree by part~(iii) of the lemma above. Let us show that we have
\begin{equation*}
\text{$U_i$ is $\mathcal M$-infinite}\quad\Leftrightarrow\quad\text{$p\Vdash\neg\psi(i)$ for some~$p\preceq q$}.
\end{equation*}
Assuming the right side, note that any $k\in M$ admits a~$p'\preceq p$ with $\mathcal M\vDash|\mathbb V(p')|\geq k$, due to property~(\ref{eq:force-prop2}) of the forcing notion. For any~$j\in M$, we get $p'\not\Vdash\theta(i,j)$, so that part~(ii) of the lemma yields~$\mathcal M\vDash\neg\theta'(i,j,\mathbb V(p'))$. We have thus seen that $U_i$ contains sequences~$\mathbb V(p')$ of arbitrary length. Conversely, if $U_i$ is $\mathcal M$-infinite, then we may consider $p:=(n,U_i)\preceq q$. Aiming at a contradiction, we assume~$p\not\Vdash\neg\psi(i)$. This yields a~$p'\preceq p$ with $p'\Vdash\theta(i,j)$ for some~$j\in M$. By part~(i) of the lemma, we may assume~$\mathcal M\vDash\theta'(i,j,\mathbb V(p'))$, possibly for a different~$p'\preceq p$. Changing the latter again, we may also assume~$\mathcal M\vDash|\mathbb V(p')|\geq j$, due to properties (\ref{eq:force-prop1}) and~(\ref{eq:force-prop2}) together with part~(iii) of the lemma. These properties yield~$\mathbb V(p')\notin U_i$. However, writing~$p'=(n',U')$, we also see that $p'\preceq p$ yields~$\mathbb V(p')\in U'\subseteq U_i$.

We now use induction in~$\mathcal M$ to show that all~$U_i$ are $\mathcal M$-finite. Let us note, again, that the latter is a $\Sigma^0_1$-property for subtrees of~$2^{<\omega}$. For~$i=0$, the claim holds by the equivalence above, since any~$p\preceq q$ validates~$p\Vdash\psi(0)$ and hence~$p\not\Vdash\neg\psi(0)$. To establish the induction step, we assume that $U_{i+1}$ is $\mathcal M$-infinite. This yields a condition $p\preceq q$ with $p\Vdash\neg\psi(i+1)$. We must also have
\begin{equation*}
p\not\Vdash\neg\big(\neg\psi(i)\lor\psi(i+1)\big).
\end{equation*}
Thus there is a $p'\preceq p$ that, in view of~$p'\not\Vdash\psi(i+1)$, must validate~$p'\Vdash\neg\psi(i)$. But then already~$U_i$ is $\mathcal M$-infinite.

For arbitrary~$i\in M$, we now learn that~$p\not\Vdash\neg\psi(i)$ holds for all~$p\preceq q$. But~$\psi(i)$ must be decided by some~$p$ from~$G$, as the latter is generic. We can assume~$p\preceq q$, as any two elements in the filter~$G$ have a common bound. So we must have $p\Vdash\psi(i)$. By Proposition~\ref{prop:all-forced} we get $\mathcal M_G\vDash\psi(i)$, as needed to establish induction.
\end{proof}

We now turn $\mathcal M_G$ into a second-order structure.

\begin{definition}
Given a generic filter~$G\subseteq\mathbb P$, let $\mathcal S_G$ consist of those subsets of~$M$ that are $\Delta_1$-definable in the $\mathcal L_1^{\mathcal M}(\mathsf G)$-structure~$\mathcal M_G$. We define $\mathcal M[G]$ as the second-order structure with the same first-order part as~$\mathcal M$ and second-order part~$\mathcal S_G$.
\end{definition}

As in the usual proof that models of~$\mathsf{I\Sigma}_1$ can be extended into models of~$\mathsf{RCA}_0$ (see, e.\,g., Lemma~IX.1.8 of~\cite{simpson09}), Proposition~\ref{prop:force-induction} entails the following.

\begin{corollary}
We have $\mathcal M[G]\vDash\mathsf{RCA}_0$ for any generic filter~$G\subseteq\mathbb P$.
\end{corollary}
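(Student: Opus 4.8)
The plan is to follow the standard argument showing that a second-order structure whose first-order part satisfies $\mathsf{I}\Sigma_1$ and whose second-order part is closed under $\Delta_1$-definability (with parameters) validates $\mathsf{RCA}_0$, as in Lemma~IX.1.8 of~\cite{simpson09}. Concretely, $\mathsf{RCA}_0$ consists of the basic arithmetical axioms, the $\Sigma^0_1$-induction scheme, and the $\Delta^0_1$-comprehension scheme, and I would verify each of these in $\mathcal M[G]$ in turn.

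First I would observe that $\mathcal M[G]$ has the same first-order part as $\mathcal M$, which satisfies $\mathsf{RCA}_0$ and in particular the basic arithmetical axioms and $\mathsf{I}\Sigma_1$ with set parameters from $\mathcal M$; so the purely arithmetical axioms hold trivially. For the induction and comprehension schemes, the point is that the formulas involved may mention the new sets in $\mathcal S_G$, i.e.\ sets that are $\Delta_1$-definable over $\mathcal M_G$. So I would first record the key closure fact: every formula of the second-order language, when all its set quantifiers are bounded to range over $\mathcal S_G$ and its set parameters are replaced by their $\mathcal L_1^{\mathcal M}(\mathsf G)$-definitions, is equivalent (over $\mathcal M[G]$) to a formula of $\mathcal L_1^{\mathcal M}(\mathsf G)$ of the same arithmetical complexity; in particular a $\Sigma^0_1$ (resp.\ $\Delta^0_1$) formula of the second-order language with parameters from $\mathcal S_G$ translates into a $\Sigma_1$ (resp.\ $\Delta_1$) formula of $\mathcal L_1^{\mathcal M}(\mathsf G)$. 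Here I use that each $A\in\mathcal S_G$ has both a $\Sigma_1$ and a $\Pi_1$ definition over $\mathcal M_G$, so substituting ``$n\in A$'' by its $\Sigma_1$ definition in positive positions and its $\Pi_1$ definition in negative positions keeps the translation in the right class.

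Granting this translation, $\Sigma^0_1$-induction in $\mathcal M[G]$ reduces to $\Sigma_1$-induction in $\mathcal M_G$, which is exactly Proposition~\ref{prop:force-induction}. For $\Delta^0_1$-comprehension, suppose $\mathcal M[G]$ thinks $\forall n(\varphi(n)\leftrightarrow\psi(n))$ for a $\Sigma^0_1$ formula $\varphi$ and a $\Pi^0_1$ formula $\psi$ with parameters from $\mathcal S_G$. Translating, we obtain a $\Sigma_1$ formula $\varphi'$ and a $\Pi_1$ formula $\psi'$ over $\mathcal M_G$ that agree on all $n\in M$; hence the set $\{n\in M\mid \mathcal M_G\vDash\varphi'(n)\}$ is $\Delta_1$-definable over $\mathcal M_G$, so it lies in $\mathcal S_G$ by definition, and it is the set whose existence $\Delta^0_1$-comprehension demands. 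This set has the correct elements by the assumed equivalence, which completes the verification.

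The main obstacle, such as it is, is the bookkeeping behind the translation claim in the second paragraph: one must check that replacing a $\Delta_1$-definable second-order parameter by its defining formulas, and bounding set quantifiers to $\mathcal S_G$, genuinely preserves arithmetical complexity, and that the resulting $\mathcal L_1^{\mathcal M}(\mathsf G)$-formula correctly captures satisfaction in $\mathcal M[G]$. This is entirely routine — it is the same argument as in~\cite{simpson09} — and the only substantive input it relies on, namely that $\mathsf{I}\Sigma_1$ survives in $\mathcal M_G$ when the induction formula may mention $\mathsf G$, has already been supplied by Proposition~\ref{prop:force-induction}. I would therefore present the proof as a brief remark that the argument of Lemma~IX.1.8 of~\cite{simpson09} goes through verbatim, using Proposition~\ref{prop:force-induction} in place of the ambient $\Sigma_1$-induction, and spell out only the two lines needed for $\Delta^0_1$-comprehension.
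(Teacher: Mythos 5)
Your proposal is correct and follows exactly the route the paper takes: the paper's entire justification is the remark that, as in Lemma~IX.1.8 of Simpson's book, Proposition~\ref{prop:force-induction} (i.e.\ $\Sigma_1$-induction in $\mathcal M_G$ for $\mathcal L_1^{\mathcal M}(\mathsf G)$-formulas) yields $\mathsf{RCA}_0$ in $\mathcal M[G]$. You merely spell out the standard translation and the $\Delta^0_1$-comprehension step in more detail than the paper does.
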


Since any set of~$\mathcal M$ is the canonical interpretation of a predicate symbol in~$\mathcal L_1^{\mathcal M}$, we see that $\mathcal M[G]$ is an $\omega$-extension of~$\mathcal M$. In particular, our fixed tree~$T$ is a set of~$\mathcal M[G]$. Let us verify the corresponding instance of weak K\H{o}nig's lemma.

\begin{lemma}
For any generic filter~$G\subseteq\mathbb P$, we have
\begin{equation*}
\mathcal M[G]\vDash\text{``\,$T$ has an infinite path"}.
\end{equation*}
\end{lemma}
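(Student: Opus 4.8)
The plan is to exhibit the generic set $\bar G$ itself as a path through $T$ and then argue that it belongs to the second-order part $\mathcal S_G$. First I would show that $\bar G$ (viewed via its characteristic function) satisfies $\mathcal M[G]\vDash\forall n\,(\bar G[n]\in T)$. For this, fix $n\in M$ and use genericity to find $p\in G$ with $\mathcal M\vDash|\mathbb V(p)|\geq n$. By Lemma~\ref{lem:G-bar}, the finite sequence $\langle\chi_{\bar G}(0),\ldots,\chi_{\bar G}(n-1)\rangle$ computed in $\mathcal M$ agrees with the length-$n$ initial segment of $\mathbb V(p)$; since $p=(|\mathbb V(p)|,U)\in\mathbb P$ means $\mathcal M\vDash U\subseteq T$ and $\mathbb V(p)\in U$, and $T$ is a tree in $\mathcal M$, we get $\mathcal M\vDash\mathbb V(p)[n]\in T$, hence $\bar G[n]\in T$. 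Because $T\in\mathcal M$ and initial segments are computed in $\mathcal M$, this statement is absolute between $\mathcal M$ and $\mathcal M[G]$, so $\mathcal M[G]$ sees $\bar G$ as a path.

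Second I would check that $\bar G\in\mathcal S_G$, i.e.\ that $\bar G$ is $\Delta_1$-definable in $\mathcal M_G$. This is immediate from the two descriptions available: Definition~\ref{def:G-bar} gives the $\Sigma_1$ form $n\in\bar G\Leftrightarrow\exists p\in G\,(\ldots)$, but more directly, $n\in\bar G$ is equivalent to the atomic $\mathcal L_1^{\mathcal M}(\mathsf G)$-statement $\mathsf G n$, which holds in $\mathcal M_G$ precisely when $n\in\bar G$ by construction. So $\bar G$ is defined by the quantifier-free formula $\mathsf G x$, trivially $\Delta_1$, and therefore lies in $\mathcal S_G$. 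Thus $\bar G$ is an element of $\mathcal M[G]$ which $\mathcal M[G]$ recognises as an infinite path through $T$, which is exactly the claim.

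I do not expect a serious obstacle here; the only point requiring a little care is the bookkeeping around \emph{where} the finite sequences and the predicate $\mathsf G$ are evaluated — one must use Lemma~\ref{lem:G-bar} (rather than Definition~\ref{def:G-bar} directly) to know that $\bar G[n]$ as computed inside $\mathcal M$ really is an initial segment of some $\mathbb V(p)$ with $p\in G$, since a priori $\bar G$ is only a subset of $M$ and its "characteristic function restricted to $[0,n)$" is a coded sequence of $\mathcal M$ whose value must be pinned down. Once that identification is made, everything else is a direct unwinding of the definitions, with absoluteness of $\sigma\in T$ (for $\sigma$ a coded sequence of $\mathcal M$ and $T\in\mathcal M$) between $\mathcal M$, $\mathcal M_G$, and $\mathcal M[G]$ doing the remaining work.
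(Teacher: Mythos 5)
Your proposal is correct and follows essentially the same route as the paper: exhibit the characteristic function of $\bar G$ as the path, use genericity to find $p\in G$ with $\mathcal M\vDash|\mathbb V(p)|\geq n$, identify the length-$n$ initial segment via Definition~\ref{def:G-bar} and Lemma~\ref{lem:G-bar}, and note that $\bar G$ is a set of $\mathcal M[G]$ because it interprets $\mathsf G$. Your explicit appeal to the tree property of $T$ to pass from $\mathbb V(p)\in U\subseteq T$ to $\mathbb V(p)[n]\in T$ is a small point the paper leaves implicit, but the argument is the same.
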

\begin{proof}
In $\mathcal M_G$, the predicate~$\mathsf G$ of~$\mathcal L_1^{\mathcal M}(\mathsf G)$ is interpreted by~$\bar G$, so that the latter is a set of~$\mathcal M[G]$. We show that the characteristic function $f:\mathbb N\to\{0,1\}$ of~$\bar G$ is the desired path, i.\,e., that $\mathcal M[G]\vDash f[n]\in T$ holds for all~$n\in M$. Given~$n\in M$, pick a $p\in G$ with $\mathcal M\vDash|\mathbb V(p)|\geq n$, which is possible as~$G$ is generic. Since we always have $\mathbb V(p)\in T$, it suffices to note that we have $\mathcal M\vDash f(i)=\mathbb V(p)_i$ for every~$i<^{\mathcal M} n$, which holds by Definition~\ref{def:G-bar} and Lemma~\ref{lem:G-bar}.
\end{proof}

We now establish the following additional property.

\begin{proposition}
The extension of~$\mathcal M$ into~$\mathcal M[G]$ is hyperimmune-free, for any generic filter~$G\subseteq\mathbb P$.
\end{proposition}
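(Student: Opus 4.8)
The plan is to run the standard proof of the hyperimmune-free basis theorem~\cite{jockusch-soare} inside the forcing framework built above. Fix a function $f$ of $\mathcal{M}[G]$; we must produce a function of $\mathcal{M}$ that dominates it. Since $\mathcal{M}[G]\vDash\mathsf{RCA}_0$ and the graph of $f$ belongs to $\mathcal{S}_G$, I first fix a bounded $\mathcal{L}_1^{\mathcal{M}}(\mathsf G)$-formula $\theta(x,w,u)$ such that, for all $x,w\in M$, we have $f(x)=w$ if and only if $\mathcal{M}_G\vDash\exists u\,\theta(x,w,u)$; totality of $f$ then yields $\mathcal{M}_G\vDash\forall x\,\exists w\,\exists u\,\theta(x,w,u)$. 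Recalling that $\forall x$ abbreviates $\neg\exists x\,\neg$ in our formula language, Proposition~\ref{prop:all-forced} supplies a condition $q=(n,U)\in G$ with $q\Vdash\neg\exists x\,\neg\exists w\,\exists u\,\theta(x,w,u)$; unwinding the clauses for $\neg$ and $\exists$, this says precisely that for every $x\in M$, no $r\preceq q$ forces $\neg\exists w\,\exists u\,\theta(x,w,u)$.

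Next, let $\theta'(x,w,u,\sigma)$ be the bounded $\mathcal{L}_1^{\mathcal{M}}$-formula that Lemma~\ref{lem:forcing-definable} associates to $\theta$ (with $\sigma$ ranging over $2^{<\omega}$), and for each $x\in M$ put
\[
U_x=\bigl\{\sigma\in U\bigm|\mathcal{M}\vDash\forall w\leq|\sigma|\,\forall u\leq|\sigma|\,\neg\theta'(x,w,u,\sigma)\bigr\}.
\]
This is a set of $\mathcal{M}$ by $\Delta^0_0$-comprehension (again it matters that ``finite'' is $\Sigma^0_1$ for subtrees of $2^{<\omega}$), and by part~(iii) of Lemma~\ref{lem:forcing-definable} the set $U\setminus U_x$ is closed upward in $U$, so $U_x$ is a subtree of $T$. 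I then prove the dichotomy: for each $x\in M$, the tree $U_x$ is $\mathcal{M}$-infinite if and only if some $p\preceq q$ forces $\neg\exists w\,\exists u\,\theta(x,w,u)$. For the direction from right to left, part~(ii) of the lemma (contrapositively) and property~(\ref{eq:force-prop2}) produce arbitrarily long members $\mathbb{V}(r)\in U_x$. For the other direction, the infinite subtree $U_x\subseteq U$ still has a unique sequence of length $n$, so $p:=(n,U_x)\preceq q$ is a condition; a hypothetical $r\preceq p$ with $r\Vdash\theta(x,w_0,u_0)$ would, via parts~(i) and~(iii) of the lemma together with~(\ref{eq:force-prop1})--(\ref{eq:force-prop2}), yield a sequence $\mathbb{V}(r'')$ lying both outside and inside $U_x$, a contradiction. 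Combining the dichotomy with the property of $q$ from the first paragraph, every $U_x$ ($x\in M$) is $\mathcal{M}$-finite; hence, by $\Delta^0_0$-comprehension, the map $g$ defined by letting $g(x)$ be the least $\ell\geq n$ with $\mathcal{M}\vDash\forall\sigma\in U\cap 2^\ell\,\exists w\leq\ell\,\exists u\leq\ell\,\theta'(x,w,u,\sigma)$ is total and is a function of $\mathcal{M}$.

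It remains to check that $g$ dominates $f$. Fix $x\in M$. Genericity and the filter property give $p_0\in G$ with $p_0\preceq q$ and $\mathcal{M}\vDash|\mathbb{V}(p_0)|\geq g(x)$; let $\sigma_0$ be the length-$g(x)$ initial segment of $\mathbb{V}(p_0)$, so that $\sigma_0\in U\cap 2^{g(x)}$, and note that by Definition~\ref{def:G-bar} and Lemma~\ref{lem:G-bar} this $\sigma_0$ is also the length-$g(x)$ initial segment of the characteristic function of $\bar G$. By the choice of $g(x)$ there are $w_1,u_1\leq^{\mathcal{M}}g(x)$ with $\mathcal{M}\vDash\theta'(x,w_1,u_1,\sigma_0)$. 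Now consider $V=\{\tau\in U\mid\tau\sqsubseteq\sigma_0\text{ or }\sigma_0\sqsubseteq\tau\}\in\mathcal{M}$: since $\bar G$ restricts into $U$ at every length (witnessed by suitable conditions of $G$ below $q$) and $\sigma_0$ is an initial segment of $\bar G$, the set $V$ is an infinite subtree of $T$ whose unique sequence of length $g(x)$ is $\sigma_0$, so $p_1:=(g(x),V)\in\mathbb{P}$ satisfies $\mathbb{V}(p_1)=\sigma_0$ and $p_1\preceq q$. The key point is that $p_1\in G$: one checks that $p_0\preceq p_1$ — every sequence in the tree of $p_0$ is comparable with $\sigma_0$ — and filters are upward closed. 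Part~(ii) of Lemma~\ref{lem:forcing-definable} then gives $p_1\Vdash\theta(x,w_1,u_1)$, so Proposition~\ref{prop:all-forced} yields $\mathcal{M}_G\vDash\exists u\,\theta(x,w_1,u)$, i.e.\ $f(x)=w_1\leq^{\mathcal{M}}g(x)$. As $x\in M$ was arbitrary, $g$ dominates $f$.

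I expect the main obstacle to be the verification, in the last paragraph, that $p_1=(g(x),V)$ actually lies in $G$: this is exactly what promotes the ground-model fact $\mathcal{M}\vDash\theta'(x,w_1,u_1,\sigma_0)$ to the value $f(x)=w_1$ in $\mathcal{M}_G$, and it relies on $\bar G$ being a genuine path through $U$ (so that $V$ is infinite and $p_0$ really lies below $p_1$). A close second is the dichotomy for the trees $U_x$ — in particular its right-to-left direction, where the truncations $w,u\leq|\sigma|$ in the definition of $U_x$ must be reconciled with the monotonicity clause~(iii) of Lemma~\ref{lem:forcing-definable}; this is also the point at which it is essential that being finite is only $\Sigma^0_1$ for subtrees of $2^{<\omega}$, so that $\Delta^0_0$-comprehension suffices to form $U_x$ and $g$.
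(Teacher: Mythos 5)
Your proof is correct and follows essentially the same route as the paper's: force the totality of~$f$ below a condition $q=(n,U)\in G$, form the trees $U_x$ of sequences on which the ground-model translation $\theta'$ from Lemma~\ref{lem:forcing-definable} has not yet produced a witness, conclude from the choice of~$q$ that each $U_x$ is $\mathcal M$-finite, extract a bounding function $g$ in~$\mathcal M$, and finally use a condition of~$G$ at height $g(x)$ together with part~(ii) of Lemma~\ref{lem:forcing-definable} and Proposition~\ref{prop:all-forced} to conclude $f(x)\leq g(x)$. The only cosmetic deviation is that in the last step you pass to the comparability tree $V$ of~$\sigma_0$ instead of reusing the tree of~$p_0$ with the smaller designated height; both variants rest on the same (harmless) tacit convention that the tree of a condition has no dead ends incomparable with its distinguished sequence.
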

\begin{proof}
Assume $f$ is a function of~$\mathcal M[G]$. By the construction of the latter, there is a $\Sigma_1$-formula~$\psi$ of $\mathcal L_1^{\mathcal M}(\mathsf G)$ such that we have
\begin{equation*}
\mathcal M[G]\vDash f(m)=n\quad\Leftrightarrow\quad\mathcal M_G\vDash\psi(m,n).
\end{equation*}
In particular, we have $\mathcal M_G\vDash\forall x\exists y\,\psi(x,y)$. From Proposition~\ref{prop:all-forced}, we know that there must be a condition~$q=(k,U)\in G$ that validates
\begin{equation*}
q\Vdash\neg\exists x\neg\exists y\,\psi(x,y).
\end{equation*}
Write $\psi(x,y)=\exists z\,\theta(x,y,z)$ for bounded~$\theta$ and consider a bounded~$\mathcal L_1^{\mathcal M}$-formula~$\theta'$ as provided by Lemma~\ref{lem:forcing-definable}. For~$m\in M$, we put
\begin{equation*}
U_m=\{\sigma\in U\,|\,\mathcal M\vDash\forall y,z\leq|\sigma|:\neg\theta'(m,y,z,\sigma)\}\in\mathcal M.
\end{equation*}
If~$U_m$ was $\mathcal M$-infinite, we would find a $p\preceq q$ with $p\Vdash\neg\exists y\,\psi(m,y)$, just like in the proof of Proposition~\ref{prop:force-induction}. But this would contradict the assumption on~$q$. Invoking unbounded search, we get a total function~$g$ of~$\mathcal M$ with $\mathcal M\vDash k\leq g(m)$ and
\begin{equation*}
\mathcal M\vDash\sigma\in U_m\to|\sigma|<g(m).
\end{equation*}
To establish $\mathcal M[G]\vDash f(m)\leq g(m)$, pick a~$p=(k',U')\in G$ with $\mathcal M\vDash k'\geq g(m)$. We may assume~$p\preceq q$, as any two elements of the filter~$G$ have a common bound. For $p'=(g(m),U')$ we have $p\preceq p'\preceq q$ and thus~$p'\in G$. Due to $\mathcal M\vDash|\mathbb V(p')|=g(m)$ we get~$\mathbb V(p')\notin U_m$, so that we find $n,i\in M$ with
\begin{equation*}
\mathcal M\vDash n,i\leq g(m)\land\theta'(m,n,i,\mathbb V(p')).
\end{equation*}
Now part~(ii) of Lemma~\ref{lem:forcing-definable} yields $p'\Vdash\psi(m,n)$. We can thus use Proposition~\ref{prop:all-forced} to infer~$\mathcal M_G\vDash\psi(m,n)$ and then $\mathcal M[G]\vDash f(m)=n\leq g(m)$.
\end{proof}

Finally, we derive a theorem that combines Harrington's famous conservation result with the hyperimmune-free basis theorem of C.~Jockusch and R.~Soare~\cite{jockusch-soare}.

\begin{theorem}\label{thm:hypfree-extend}
Any countable model of~$\mathsf{RCA}_0$ has a hyperimmune-free $\omega$-extension that validates~$\mathsf{WKL}_0$.
\end{theorem}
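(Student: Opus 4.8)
\emph{Proof plan.} The idea is to iterate the construction of this section along a chain of length~$\omega$, adding a path to one tree at each stage, and then pass to the union. Concretely, the plan is to build an increasing chain $\mathcal M=\mathcal M_0\subseteq\mathcal M_1\subseteq\mathcal M_2\subseteq\cdots$ of countable models of~$\mathsf{RCA}_0$ in which each~$\mathcal M_{s+1}$ is a hyperimmune-free $\omega$-extension of~$\mathcal M_s$, arranged so that every infinite tree occurring in some~$\mathcal M_s$ acquires a path at some later stage. The union~$\mathcal N=\bigcup_s\mathcal M_s$, with first-order part~$M$ and second-order part the union of those of the~$\mathcal M_s$, will then be the model we want.

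For the chain, set~$\mathcal M_0=\mathcal M$. Each~$\mathcal M_s$ produced below is again countable, since its second-order part consists of subsets of~$M$ that are $\Delta_1$-definable over a structure in the countable language~$\mathcal L_1^{\mathcal M_s}(\mathsf G)$, of which there are only countably many; hence Proposition~\ref{prop:generics-exist} applies at every stage. For the bookkeeping, fix once and for all, for each model appearing in the chain (as soon as it has been constructed), an enumeration of its trees, and fix a bijection $s\mapsto(a_s,b_s)$ of~$\mathbb N$ with~$\mathbb N^2$ satisfying~$a_s\leq s$. At stage~$s$ the model~$\mathcal M_{a_s}$ is already available; let~$T_s$ be the~$b_s$-th tree in its enumeration, if this exists. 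If $\mathcal M_s\vDash$~``$T_s\subseteq 2^{<\omega}$ is an infinite tree'', we run Jockusch--Soare forcing over~$\mathcal M_s$ relative to~$T_s$, choose a generic filter~$G_s\subseteq\mathbb P$ by Proposition~\ref{prop:generics-exist}, and put~$\mathcal M_{s+1}=\mathcal M_s[G_s]$; in every other case we put~$\mathcal M_{s+1}=\mathcal M_s$. By the results of this section, $\mathcal M_{s+1}\vDash\mathsf{RCA}_0$, is a hyperimmune-free $\omega$-extension of~$\mathcal M_s$, and contains a path through~$T_s$ whenever the forcing was applied. Note that any tree~$T$ of any~$\mathcal M_s$ has a fixed index~$b$ in the enumeration of the trees of~$\mathcal M_s$, so~$T=T_t$ for the unique~$t$ with~$(a_t,b_t)=(s,b)$, and then~$t\geq a_t=s$.

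It remains to verify the three properties of~$\mathcal N$. First, $\mathcal N\vDash\mathsf{RCA}_0$: any instance of $\Sigma^0_1$-induction or $\Delta^0_1$-comprehension has its finitely many set parameters in some~$\mathcal M_s$; since~$\mathcal M_s$ and~$\mathcal N$ share the first-order part and $\Sigma^0_1$- and $\Pi^0_1$-formulas contain no set quantifiers, such formulas are absolute between~$\mathcal M_s$ and~$\mathcal N$, so the instance transfers from~$\mathcal M_s\vDash\mathsf{RCA}_0$ to~$\mathcal N$ (in the comprehension case the witnessing set already lies in~$\mathcal M_s\subseteq\mathcal N$); this is the usual argument that models of~$\mathsf{I}\Sigma_1$ extend to models of~$\mathsf{RCA}_0$. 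Second, $\mathcal N\vDash\mathsf{WKL}_0$: if~$\mathcal N\vDash$~``$T$ is an infinite tree'', then~$T$ lies in some~$\mathcal M_s$, and since ``$T\subseteq 2^{<\omega}$ is an infinite tree'' is~$\Pi^0_1$ and hence absolute, $T$ is an infinite tree in~$\mathcal M_s$ and in every later~$\mathcal M_t$; at the stage~$t\geq s$ identified above we have~$T=T_t$, so~$\mathcal M_{t+1}\subseteq\mathcal N$ contains a path through~$T$, and ``being a path through~$T$'' is again absolute. Third, hyperimmune-freeness is transitive along $\omega$-extensions — if a function~$f$ is dominated by a function~$g$ of~$\mathcal M_s$ and~$g$ is dominated by a function~$h$ of~$\mathcal M_{s-1}$, then~$f(x)\leq g(x)\leq h(x)$ for all~$x\in M$ — and any function of~$\mathcal N$ has its graph in some~$\mathcal M_s$, so descending through the finite chain $\mathcal M_s\supseteq\cdots\supseteq\mathcal M_0$ it is dominated by a function of~$\mathcal M_0=\mathcal M$. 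Finally~$\mathcal N$ is visibly an $\omega$-extension of~$\mathcal M$, which completes the plan.

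I do not expect a genuine obstacle: this is the standard union-of-$\omega$-chain argument, built on the single-tree construction already carried out in this section. The two points that call for care are (i) that every~$\mathcal M_s$ in the chain remains countable, so that generic filters exist at each stage, and (ii) that the bookkeeping still reaches a tree that first appears far along the chain; both are handled by the dovetailing described above.
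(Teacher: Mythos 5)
Your proof is correct and takes essentially the same approach as the paper: iterate the Jockusch--Soare forcing construction of this section along an $\omega$-chain with dovetailed bookkeeping, take the union, and observe that hyperimmune-freeness and the relevant $\Pi^1_2$ facts are preserved. The paper's exposition is a bit more terse (it packages the verification of $\mathsf{RCA}_0$ and $\mathsf{WKL}_0$ in the limit by noting that $\Pi^1_2$-statements persist along chains of $\omega$-extensions), and it enumerates only the infinite trees of each $\mathcal M_i$ up front rather than enumerating all trees and filtering at each stage, but these are cosmetic differences; your more spelled-out checks of countability, absoluteness, and transitivity of domination are all as intended.
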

\begin{proof}
Write~$\mathcal M_0$ for the given model of~$\mathsf{RCA}_0$. We recursively assume that countable $\omega$-extensions~$\mathcal M_0\subseteq\ldots\subseteq\mathcal M_n$ with $\mathcal M_i\vDash\mathsf{RCA}_0$ have already been constructed. Once and for all (i.\,e., independently of~$n$), pick enumerations of all~$T_{ij}\in\mathcal M_i$ with
\begin{equation*}
\mathcal M_i\vDash\text{``$\,T_{ij}\subseteq 2^{<\omega}$ is an infinite tree"}.
\end{equation*}
Note that being an infinite tree is arithmetical, so that satisfaction is essentially independent of~$\mathcal M_i$. For the recursion step, let $n$ be the Cantor code of the pair~$(i,j)$, where we have~$i\leq n$. We use the previous results of this section with $\mathcal M_n$ and $T_{ij}$ at the place of~$\mathcal M$ and~$T$. This yields a hyperimmune-free $\omega$-extension of~$\mathcal M_n$ into a model $\mathcal M_{n+1}=\mathcal M_n[G]$ such that we have
\begin{equation*}
\mathcal M_{n+1}\vDash\mathsf{RCA}_0+\text{``$T_{ij}$ has an infinite path"}.
\end{equation*}
Having completed the recursion, we put~$\mathcal M_\omega=\bigcup_{n\in\omega}\mathcal M_n$, i.\,e., we take the union of the second-order parts and declare that $\mathcal M_\omega$ has the same first-order part as the models~$\mathcal M_n$. It is straightforward to see that $\mathcal M_\omega$ satisfied~$\mathsf{WKL}_0$ (as the relevant statements are~$\Pi^1_2$ and hence preserved under chains of~$\omega$-extensions). We inductively learn that each $\mathcal M_n$ is a hyperimmune-free extension of~$\mathcal M_0$, which~entails that the same holds for~$\mathcal M_\omega$.
\end{proof}

\section{Conservativity for well-ordering principles}\label{sect:wo-princ}

In this section, we prove that $\mathsf{WKL}_0$ is conservative over~$\mathsf{RCA}_0$ for well-ordering principles. We also show that conservativity fails when we replace linear orders by the somewhat more general well-quasi-orders. We note that there is a substantial body of results on the reverse mathematics of well-ordering principles, which was briefly reviewed in the introduction.

To be specific, we declare that a partial order is given as a set~$X\subseteq\mathbb N$ of coded pairs such that the relation~${\leq_X}\subseteq\mathbb N^2$ with $m\leq_X n$ for~$(m,n)\in X$ is transitive and antisymmetric with field~$\{n\in\mathbb N\,|\,n\leq_X n\}$. For the following result we stress that well-orders are, in particular, linear. As mentioned in the introduction, a pointwise version of the result was given by Kreuzer and Yokoyama (see Theorem~3.2 of \cite{kreuzer-yokoyama}).

\begin{theorem}\label{thm:cons-wo-princ}
If $\mathsf{WKL}_0$ proves a statement of the form
\begin{equation*}
\forall X\big(\text{``$X$ is a well-order"}\to\psi(X)\big)\quad\text{with}\quad\psi\in\Pi^1_1,
\end{equation*}
then $\mathsf{RCA}_0$ proves the same statement.
\end{theorem}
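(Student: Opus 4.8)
The plan is to argue model-theoretically, the two ingredients being Theorem~\ref{thm:hypfree-extend} and the fact that hyperimmune-free $\omega$-extensions preserve well-orders. By the completeness theorem it suffices to show that every model of~$\mathsf{RCA}_0$ satisfies the displayed implication, and by L\"owenheim--Skolem we may take the model countable. So fix a countable $\mathcal M\vDash\mathsf{RCA}_0$ and some $X\in\mathcal M$ with $\mathcal M\vDash\text{``$X$ is a well-order"}$; we must show $\mathcal M\vDash\psi(X)$. By Theorem~\ref{thm:hypfree-extend}, $\mathcal M$ has a hyperimmune-free $\omega$-extension $\mathcal N\vDash\mathsf{WKL}_0$. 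Granting the key claim below---that $X$ is still a well-order in~$\mathcal N$---we obtain $\mathcal N\vDash\psi(X)$, since $\mathsf{WKL}_0$ proves the displayed implication. Writing $\psi(X)$ as $\forall Z\,\eta(X,Z)$ with arithmetical~$\eta$, this gives $\mathcal N\vDash\eta(X,Z)$ for all $Z\in\mathcal N$, hence in particular for all $Z\in\mathcal M$; since $\eta$ is arithmetical and $\mathcal M$ and~$\mathcal N$ share their first-order part, $\mathcal M\vDash\eta(X,Z)$ for all $Z\in\mathcal M$, that is, $\mathcal M\vDash\psi(X)$.

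It remains to prove the key claim: \emph{a well-order of a model $\mathcal M\vDash\mathsf{RCA}_0$ remains a well-order in any hyperimmune-free $\omega$-extension~$\mathcal N$.} Linearity of~$X$ is arithmetical and hence absolute between $\mathcal M$ and~$\mathcal N$. For well-foundedness it suffices, by contraposition, to turn an infinite $X$-descending sequence $g\in\mathcal N$ into one lying in~$\mathcal M$. Since the extension is hyperimmune-free, some function $h$ of~$\mathcal M$ dominates~$g$. In~$\mathcal M$, form the tree $S$ of all finite $X$-descending sequences $\langle a_0,\dots,a_{k-1}\rangle$ with $a_i\leq h(i)$ for $i<k$; being a node of~$S$ is $\Delta^0_1$ in $X$ and~$h$, so $S\in\mathcal M$. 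As $h$ dominates~$g$, each initial segment $g[\ell]$ is a node of~$S$, and because $g[\ell]$ lies in the first-order part shared by $\mathcal M$ and~$\mathcal N$, it already witnesses in~$\mathcal M$ that $S$ has a node of length~$\ell$. Thus $S$ has nodes of every length in~$\mathcal M$, and it is enough to extract from~$S$, inside~$\mathcal M$, an infinite $X$-descending sequence.

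For this I would use the ``greedy" path $b$ obtained by letting $b(k)$ be the $\leq_X$-largest~$a$ with $\langle b(0),\dots,b(k-1),a\rangle\in S$, and a default value when no such~$a$ exists; since $b$ is $\Delta^0_1$-definable from $X,h,S$, it is a function of~$\mathcal M$. Linearity of~$X$ yields a capping property: whenever $\langle b(0),\dots,b(k-1),c_k,c_{k+1},\dots,c_{m-1}\rangle\in S$, also $\langle b(0),\dots,b(k-1),b(k),c_{k+1},\dots,c_{m-1}\rangle\in S$, because $c_{k+1}<_X c_k\leq_X b(k)$. The natural approach is to prove by induction on~$k$ that $b[k]$ has extensions in~$S$ of every length, but that formula is~$\Pi^0_2$, beyond the induction available in~$\mathsf{RCA}_0$; this is the main obstacle. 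I would avoid it by fixing~$m$ and proving, by a bounded ($\Pi^0_1$) induction on $k\leq m$, the statement that $b[k]$ has an extension in~$S$ of length~$m$: the base case holds because $S$ has a node of length~$m$, and the step uses the capping property and the $\leq_X$-maximality in the choice of~$b(k)$. Specialising to $m=k+1$ for every~$k$ shows that the default value is never used and that $b[k]\in S$ for all~$k$, so $b$ is an infinite $X$-descending sequence of~$\mathcal M$. This proves the key claim, and with it the theorem; I expect the per-$m$ induction to be the only genuinely delicate point.
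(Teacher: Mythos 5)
Your proof is correct and follows essentially the same route as the paper's: pass to a countable model, invoke Theorem~\ref{thm:hypfree-extend}, use hyperimmune-freeness to extract a dominator for the descending sequence, and then build a descending sequence inside~$\mathcal M$ by greedily taking the $\leq_X$-maximum below the bound at each step. Your ``key claim'' (hyperimmune-free $\omega$-extensions of models of~$\mathsf{RCA}_0$ preserve well-orders) is exactly what the paper proves inline, just stated contrapositively. The one genuine difference is how the two proofs verify that the greedy recursion never reaches a dead end. The paper avoids the induction-complexity worry altogether by carrying out a $\Sigma^0_1$-induction \emph{in~$\mathcal N$}, where the descending sequence~$f$, the dominator, and the greedy sequence all coexist: one shows $\mathcal N\vDash f(n)\leq_X h(n)$, so that $f(n+1)$ always witnesses nonemptiness of the relevant set. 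Your per-$m$ bounded induction inside~$\mathcal M$ achieves the same thing and is correct, but it is slightly more intricate and also more cautious than necessary: since your tree~$S$ is bounded by~$h$, the statement ``$b[k]$ has extensions in~$S$ of every length'' is $\Pi^0_1$ (the existential over length-$m$ nodes is bounded), not $\Pi^0_2$, and $\Pi^0_1$-induction is provable in~$\mathsf{RCA}_0$, so a direct induction on~$k$ would also have worked.
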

\begin{proof}
We derive that our statement holds in all countable models~$\mathcal M\vDash\mathsf{RCA}_0$, which suffices due to completeness and the downward L\"owenheim-Skolem theorem (see, e.\,g., Section~5.1 of~\cite{dzhafarov-mummert} for a discussion of these results in the context of second-order arithmetic). By Theorem~\ref{thm:hypfree-extend}, we have a hyperimmune-free $\omega$-extension of~$\mathcal M$ into a model~$\mathcal N\vDash\mathsf{WKL}_0$. Consider an~$X\in\mathcal M$ with
\begin{equation*}
\mathcal M\vDash\text{``$X$ is a linear order"}\land\neg\psi(X).
\end{equation*}
The same statement holds in~$\mathcal N$, as it is~$\Sigma^1_1$. Given that $\mathsf{WKL}_0$ proves the statement from the theorem, we have a function~$f$ of~$\mathcal N$ such that $\mathcal N\vDash f(n+1)<_X f(n)$ holds for all~$n\in M$, where we write~$M$ for the shared first-order part of our models. Let us consider a function~$g$ of~$\mathcal M$ that dominates~$f$, which exists since the extension is hyperimmune-free. By primitive recursion, we get a function $h$ of~$\mathcal M$ that has start value $h(0)=f(0)$ and satisfies
\begin{equation*}
\mathcal M\vDash\text{``$h(n+1)$ is the $\leq_X$-maximal $x\leq_{\mathbb N}g(n+1)$ with $x<_X h(n)$"}.
\end{equation*}
To ensure that there is an~$x$ with the indicated property, we show $\mathcal N\vDash f(n)\leq_X h(n)$ by induction in~$\mathcal N$. In the induction step, we learn that $x=f(n+1)$ satisfies both $x\leq_{\mathbb N} g(n+1)$ and $x<_X f(n)\leq_X h(n)$, so that we get $f(n+1)\leq_X h(n+1)$ by maximality (all in the sense of our models). Now~$h$ witnesses that $X$ is ill-founded according to~$\mathcal M$. Hence the latter satisfies the statement from the theorem.
\end{proof}

As a corollary to the proof, we record the following conservativity result for instances of the ascending descending sequence principle (introduced in~\cite{hirschfeldt-shore}).

\begin{corollary}\label{cor:ads}
    If $\mathsf{WKL}_0$ proves a statement of the form
    \begin{equation*}
        \forall X\big(\varphi(X)\land\text{``$X$ is a linear order"}\to\text{``there is a strictly monotone $f:\mathbb N\to X$"}\big)
    \end{equation*}
    with $\varphi\in\Sigma^1_1$, then $\mathsf{RCA}_0$ proves the same statement.
\end{corollary}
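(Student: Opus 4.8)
The plan is to transcribe the proof of Theorem~\ref{thm:cons-wo-princ} almost verbatim, adding a single case distinction that reflects the fact that a ``strictly monotone'' sequence may be $\leq_X$-increasing or $\leq_X$-decreasing. First I would reduce, as before, to proving the statement in an arbitrary countable model~$\mathcal M\vDash\mathsf{RCA}_0$, and use Theorem~\ref{thm:hypfree-extend} to fix a hyperimmune-free $\omega$-extension of~$\mathcal M$ into a model~$\mathcal N\vDash\mathsf{WKL}_0$; write~$M$ for the shared first-order part. Given~$X\in\mathcal M$ with $\mathcal M\vDash\varphi(X)\land\text{``$X$ is a linear order''}$, I would observe that this statement is $\Sigma^1_1$ (since $\varphi\in\Sigma^1_1$ and being a linear order is arithmetical), hence passes up to the $\omega$-extension~$\mathcal N$. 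Because $\mathsf{WKL}_0$ proves the principle, $\mathcal N$ then contains a strictly monotone function~$f:\mathbb N\to X$; and as ``strictly monotone'' abbreviates ``strictly $\leq_X$-increasing or strictly $\leq_X$-decreasing'', one of these two alternatives holds of~$f$ in the fixed model~$\mathcal N$.

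In the decreasing case, $\mathcal N\vDash f(n+1)<_X f(n)$ for all~$n\in M$, and we are in exactly the situation of the proof of Theorem~\ref{thm:cons-wo-princ}: dominate~$f$ by a function~$g$ of~$\mathcal M$ (using hyperimmune-freeness of the extension), define~$h$ of~$\mathcal M$ by primitive recursion with $h(0)=f(0)$ and $h(n+1)$ the $\leq_X$-maximal $x\leq_{\mathbb N}g(n+1)$ with $x<_X h(n)$, check $\mathcal N\vDash f(n)\leq_X h(n)$ by induction in~$\mathcal N$ (which is what makes the defining set non-empty), and conclude that~$h$ is a strictly $\leq_X$-decreasing, hence strictly monotone, function of~$\mathcal M$ with values in~$X$. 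In the increasing case I would run the mirror-image construction: with the same~$g$, set $h(0)=f(0)$ and let $h(n+1)$ be the $\leq_X$-minimal $x\leq_{\mathbb N}g(n+1)$ with $x>_X h(n)$. The corresponding induction in~$\mathcal N$ gives $\mathcal N\vDash h(n)\leq_X f(n)$: in the step, $h(n)\leq_X f(n)<_X f(n+1)$ forces $h(n)<_X f(n+1)$ (using antisymmetry of~$\leq_X$), and $f(n+1)\leq_{\mathbb N}g(n+1)$ since~$g$ dominates~$f$, so $x=f(n+1)$ witnesses non-emptiness of the defining set, whence $h(n+1)\leq_X f(n+1)$ by minimality. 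Again~$h$ is a strictly monotone function of~$\mathcal M$ into~$X$, so in both cases~$\mathcal M$ satisfies the consequent of the statement, as required.

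I do not expect a serious obstacle here beyond bookkeeping. The one point that deserves a remark is the legitimacy of the case distinction: ``$f$ is strictly $\leq_X$-increasing or strictly $\leq_X$-decreasing'' is a sentence true in the single fixed structure~$\mathcal N$, so one of its disjuncts is true in~$\mathcal N$, and we may argue within that case. As in Theorem~\ref{thm:cons-wo-princ}, one should also note that the auxiliary induction carried out in~$\mathcal N$ is of low complexity (a $\Sigma^0_1$-formula in the parameters $f$, $h$, $X$, so available from~$\mathsf{RCA}_0$) and that this induction is precisely what guarantees that the primitive recursion defining~$h$ in~$\mathcal M$ never breaks down. Everything else is an exact copy of the earlier argument, with maximum/minimum and $<_X$/$>_X$ interchanged in the increasing case.
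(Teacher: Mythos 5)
Your proposal is correct and takes essentially the same approach as the paper, whose proof of this corollary simply says to reuse the proof of Theorem~\ref{thm:cons-wo-princ} for the descending case and to adapt it for the ascending one; your mirror-image construction (minimal $x\leq_{\mathbb N}g(n+1)$ with $x>_Xh(n)$, with the induction $\mathcal N\vDash h(n)\leq_Xf(n)$) is exactly the intended adaptation.
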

\begin{proof}
The previous proof with $\varphi$ at the place of~$\neg\psi$ covers the case where~$f$ is descending. It is straightforward to adapt the argument to the ascending case.
\end{proof}

Theorem~\ref{thm:cons-wo-princ} becomes false when we drop linearity, as the statement
\begin{equation*}
\forall X\big(\text{``$X$ is well-founded"}\to\text{``$X$ is no infinite subtree of~$2^{<\omega}$\,"}\big)
\end{equation*}
is a reformulation of weak K\H{o}nig's lemma (with the partial order~$\sqsupseteq$ on~$X$). To prove a stronger result, we recall that~$X$ is a well-partial-order if any infinite~sequence~$x_0,x_1,\ldots\subseteq X$ admits $i<j$ with $x_i\leq_X x_j$ (see Remark~\ref{rmk:equiv-wpo} about alternative definitions over~$\mathsf{RCA}_0$).

\begin{theorem}\label{thm:WKL-wpo}
Over~$\mathsf{RCA}_0$, weak K\H{o}nig's lemma is equivalent to
\begin{equation*}
\forall X\big(\text{``$X$ is a well-partial-order"}\to\text{``\,$W(X)$ is a well-partial-order"}\big)
\end{equation*}
for some computable transformation~$W$ (which is given as the code of a program that decides~$m\leq_{W(X)}n$ with oracle~$X$).
\end{theorem}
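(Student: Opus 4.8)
The plan is to prove both directions. The direction from weak K\H{o}nig's lemma to the well-ordering principle should be comparatively routine: we need a computable transformation $W$ such that, provably in $\mathsf{WKL}_0$, whenever $X$ is a well-partial-order so is $W(X)$. We should design $W$ so that this forward implication holds in a very weak system (ideally $\mathsf{RCA}_0$, or at worst $\mathsf{WKL}_0$ with room to spare), and so that the failure of the well-partial-order property for $W(X)$ encodes an infinite path through a given binary tree. The natural construction is to code, inside $W(X)$, pairs consisting of a node $\sigma \in 2^{<\omega}$ together with an element of $X$, using the partial order on $2^{<\omega}$ (say $\sqsupseteq$) so that an infinite antichain — or, more precisely, an infinite bad sequence — in $W(X)$ can only arise from an infinite path in the tree that $X$ is ``secretly'' coding. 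So I would let the transformation act on an arbitrary partial order $X$ but build in a mechanism (reading off a tree from $X$ via some fixed coding convention) so that $W(X)$ is a well-partial-order exactly when either $X$ fails to code an infinite tree or the coded tree has a path. This makes the two directions dual to one another.

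For the reverse direction — the well-ordering principle implies weak K\H{o}nig's lemma over $\mathsf{RCA}_0$ — I would argue in a countable model $\mathcal M \vDash \mathsf{RCA}_0$ that satisfies the principle, and take an infinite tree $T \subseteq 2^{<\omega}$ with $T \in \mathcal M$; the goal is to produce a path. The idea is to build a well-partial-order $X = X_T \in \mathcal M$, computable from $T$, such that $W(X_T)$ being a well-partial-order forces the existence of a path through $T$. Concretely, $X_T$ should be something like: elements are pairs $(\sigma, k)$ with $\sigma \in 2^{<\omega}$, ordered so that the order is a well-partial-order precisely because $T$, being a tree, branches in a controlled way — e.g. one can make $X_T$ essentially a disjoint union of finite antichains (one per level of $T$) plus a well-founded backbone, ensuring ``is a well-partial-order'' is provable in $\mathsf{RCA}_0$ using only that each level $2^{\le n}$ is finite. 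Then $W(X_T)$ is arranged so that an infinite bad sequence in it is exactly an infinite path through $T$: applying the hypothesis gives that $W(X_T)$ is a well-partial-order, hence $T$ has... no, the logic must be that the \emph{negation} propagates, so I should instead set it up contrapositively — if $T$ has no path, then $W(X_T)$ is bad (has an infinite antichain), so by the principle $X_T$ is not a well-partial-order, contradicting the $\mathsf{RCA}_0$-provable fact that it is. This contradiction yields a path, and since countable models were arbitrary, $\mathsf{RCA}_0$ plus the principle proves weak K\H{o}nig's lemma.

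The main obstacle, I expect, is the precise combinatorial design of $W$ and of the coding $T \mapsto X_T$ so that simultaneously: (a) ``$X_T$ is a well-partial-order'' is provable in $\mathsf{RCA}_0$ (this needs the badness of a sequence in $X_T$ to be witnessed at a $\Sigma^0_1$ or finitary level, paralleling the remark in Section~\ref{sect:hyp-free} that subtrees of $2^{<\omega}$ have $\Sigma^0_1$-finiteness), (b) the forward implication ``$X$ wpo $\Rightarrow$ $W(X)$ wpo'' holds over $\mathsf{WKL}_0$, and (c) an infinite bad sequence in $W(X_T)$ genuinely computes a path through $T$ in a way $\mathsf{RCA}_0$ can verify. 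Balancing (a) and (c) is delicate because making $X_T$ ``obviously'' a well-partial-order tends to destroy the coding of $T$, while faithfully coding $T$ risks making well-partial-orderedness of $X_T$ as strong as weak K\H{o}nig's lemma itself — the trick will be to push all the tree-structure into $W$ rather than into $X_T$, keeping $X_T$ a trivially-wqo ``scaffold'' whose elements are merely tagged by tree nodes. I would also need to double-check the uniformity requirement, namely that $W$ is presented as a single program deciding $m \le_{W(X)} n$ from oracle $X$, which should be immediate from the explicit definition.
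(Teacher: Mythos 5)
Your proposal has the right outer shape (prove the reversal in a countable model via a contrapositive of the principle), but the direction in which the existential information flows is wrong, and this is a genuine gap rather than a deferred detail. In your reversal you want: $X_T$ provably a well-partial-order in $\mathsf{RCA}_0$, apply the principle to conclude that $W(X_T)$ is a well-partial-order, and from this deduce that $T$ has a path. But ``$W(X_T)$ is a wpo'' is $\Pi^1_1$ and ``$T$ has a path'' is $\Sigma^1_1$, so the deduction requires $\mathsf{RCA}_0$ to prove the implication ``$T$ has no path $\Rightarrow$ $W(X_T)$ admits an infinite bad sequence''. You never construct such a bad sequence, and it is unclear how one could: $\mathsf{RCA}_0$ cannot in general manufacture an infinite witness from a $\Pi^1_1$ non-existence hypothesis, and the natural candidates fail (e.g.\ $(T,\sqsupseteq)$ itself is \emph{never} a wpo for infinite $T$ -- enumerating $T$ by non-decreasing length gives a bad sequence -- which is precisely why the ``well-founded'' counterexample displayed before Theorem~\ref{thm:WKL-wpo} does not transfer to well-partial-orders). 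A related symptom: you hope the forward implication might hold already in $\mathsf{RCA}_0$; it cannot, since together with your reversal that would make $\mathsf{RCA}_0$ prove weak K\H{o}nig's lemma. The principle for the correct $W$ must be \emph{exactly} as strong as $\mathsf{WKL}_0$, so the forward direction is where weak K\H{o}nig's lemma is genuinely consumed, not a routine warm-up.

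The paper's proof inverts your information flow and routes everything through $\Sigma^0_1$-separation instead of paths. It defines an order $\mathsf S(Y)$ on $2^{<\omega}$ such that bad sequences in $\mathsf S(Y)$ are essentially separating sets for the $\Sigma^0_1$-instance coded by $Y$ (both directions of this claim are explicit, $\mathsf{RCA}_0$-verifiable constructions). The transformation $W$ then performs a level-by-level ``diagnostic'': it checks along an initial segment $J\subseteq\mathbb N$ whether the input $X$ is isomorphic to $\mathsf S(Y)$ and whether the separation premise holds, and outputs a \emph{linear} order that is a well-order unless $J=\mathbb N$, in which case it is an explicit descending $\omega$-chain. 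For the reversal one takes $X\cong\mathsf S(Y)$; then $W(X)$ is \emph{unconditionally and provably} not a wpo, the contrapositive of the principle yields a bad sequence in $X$ itself, and that bad sequence computes the separating set. For the forward direction, if $W(X)$ is bad then $X\cong\mathsf S(Y)$, and $\Sigma^0_1$-separation (i.e.\ $\mathsf{WKL}_0$) produces the bad sequence in $X$. Nothing in this argument ever constructs an infinite object from a non-existence assumption, which is exactly the step your plan cannot supply.
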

\begin{proof}
We use the result that weak K\H{o}nig's lemma is equivalent to $\Sigma^0_1$-separation (see Lemma~IV.4.4 of~\cite{simpson09}). For two disjoint $\Sigma^0_1$-collections $\mathcal Y_i=\{m\,|\,\exists n\,\theta_i(m,n)\}$, this principle asserts that there is a set~$S$ with~$\mathcal Y_0\subseteq S$ and~$\mathcal Y_1\subseteq\mathbb N\backslash S$. By coding the data into a single set~$Y=\{\langle i,m,n\rangle\,|\,\theta_i(m,n)\}$, we reach the equivalent principle that any set $Y$ with
\begin{equation}\label{eq:sep-prem}
\langle 0,m,n\rangle\in Y\land\langle 1,m',n'\rangle\in Y\to m\neq m'
\end{equation}
admits a set $S$ such that we have
\begin{equation}\label{eq:sep-concl}
\langle 0,m,n\rangle\in Y\to m\in S\quad\text{and}\quad\langle 1,m,n\rangle\in Y\to m\notin S.
\end{equation}
Let $\mathsf S(Y)$ be the partial order with underlying set~$2^{<\omega}$ such that $\sigma<_{\mathsf S(Y)}\tau$ holds precisely if we have $|\sigma|<|\tau|$ and there are $m<|\sigma|$ and $n<|\tau|$ with $\langle 0,m,n\rangle\in Y$ and $\sigma_m=0$ or with $\langle 1,m,n\rangle\in Y$ and $\sigma_m=1$.

\begin{claim}
There is an $S$ that validates~(\ref{eq:sep-concl}) precisely if~$\mathsf S(Y)$ is no well-partial-order.
\end{claim}

If~(\ref{eq:sep-concl}) holds, the sequence $S[0],S[1],\ldots$ admits no $m<n$ with $S[m]\leq_{\mathsf S(Y)} S[n]$. For the converse, assume that we have an infinite sequence~$\sigma^0,\sigma^1,\ldots\subseteq 2^{<\omega}$ such that $\sigma^m\not\leq_{\mathsf S(Y)}\sigma^n$ holds for all~$m<n$. We may assume $0<|\sigma^0|<|\sigma^1|<\ldots$ by passing to a subsequence. Let us put~$S=\{m\in\mathbb N\,|\,\sigma^m_m=1\}$ (where $\sigma^i_j$ is the $j$-th entry of~$\sigma^i$). If we have~$\langle 0,m,n\rangle\in Y$, then $\sigma^m\not\leq_{\mathsf S(Y)}\sigma^N$ for~$N=\max(m+1,n)$ forces~$\sigma^m_m=1$. Thus we get~$m\in S$, as required for~(\ref{eq:sep-concl}). Similarly, $\langle 1,m,n\rangle\in Y$ implies~$m\notin S$.

To define~$W(X)$, we write~$Y$ for the underlying set of~$X$. Let~$e_Y:Y\to 2^{<\omega}$ be a canonical injection that is bijective in case~$Y$ is infinite (compose $Y\cong I\subseteq\mathbb N\cong 2^{<\omega}$ for an initial segment~$I$ of~$\mathbb N$). Let us consider the initial segment~$J\subseteq\mathbb N$ such that we have~$N\in J$ precisely if the following holds:
\begin{enumerate}[label=(\roman*)]
\item for all~$m,n\in Y\cap\{0,\ldots,N-1\}$ we have
\begin{equation*}
m\leq_X n\quad\Leftrightarrow\quad e_Y(m)\leq_{\mathsf S(Y)}e_Y(n),
\end{equation*}
\item condition~(\ref{eq:sep-prem}) holds for all~$m,m',n,n'<N$.
\end{enumerate}
We now define~$W(X)$ as the linear order on~$Y$ that inverts $\leq_{\mathbb N}$ on~$J$ and keeps it unchanged on~$Y\backslash J$, i.\,e., with
\begin{equation}\label{eq:W(X)}
m\leq_{W(X)}n\quad\Leftrightarrow\quad\begin{cases}
m\geq_{\mathbb N} n & \text{if $m,n\in J\cap Y$},\\
m\leq_{\mathbb N} n & \text{if $m,n\in Y\backslash J$},\\
0=0 & \text{if $m\in J\cap Y$ and $n\in Y\backslash J$}.
\end{cases}
\end{equation}
Let us show that $\Sigma^0_1$-separation is equivalent to the implication from the theorem. To establish the latter, assume that~$W(X)$ is no well-partial-order. Then~$J\cap Y$ must be infinite. It follows that $e_Y$ is an order isomorphism~$X\cong\mathsf S(Y)$ and that~(\ref{eq:sep-prem}) is always satisfied. Using $\Sigma^0_1$-separation, we can validate~(\ref{eq:sep-concl}). Now the claim above tells us that~$X$ is no well-partial-order either. Conversely, we derive separation for a set~$Y$ that satisfies~(\ref{eq:sep-prem}). We may assume that~$Y$ is infinite (e.\,g., extend~$Y$ by~all tuples $\langle 0,m,n'\rangle$ such that we have $\langle 0,m,n\rangle\in Y$ for some~$n<n'$). Let~$X$ be the order with underlying set~$Y$ such that~$e_Y$ is an isomorphism~$X\cong\mathsf S(Y)$. We then have~$J=\mathbb N$, so that~$W(X)$ is no well-partial-order. By the implication from the theorem, we learn that~$\mathsf S(Y)$ is no well-partial-order either. Finally, the claim above tells us that~(\ref{eq:sep-concl}) can be satisfied.
\end{proof}

As mentioned in the introduction, many important principles from reverse mathematics have been characterized by well-ordering principles, i.\,e., by transformations of linear orders that preserve well-foundedness. The literature also contains characterizations by transformations of well-partial-orders, which are often even easier to state (though not easier to prove). As an example, $\mathsf{RCA}_0$ proves that arithmetical comprehension is equivalent to Higman's lemma~\cite{simpson-higman}, which asserts that the finite sequences over a well-partial-order can again be equipped with a natural well-partial-ordering. In light of such a result, the transformation~$W$ from the proof of Theorem~\ref{thm:WKL-wpo} seems rather unsatisfactory, not least because~$W(X)$ heavily depends on the underlying set of~$X$ (and not just on its order type). The second author has indeed shown (see Proposition~5.3.6 of~\cite{uftring-thesis}) that Theorem~\ref{thm:WKL-wpo} cannot hold for any~$W$ that is natural in a certain sense. Specifically, the theorem becomes false when we demand that~$W$ is a dilator on well-partial-orders (see~\cite{frw-kruskal} for this notion and~\cite{girard-pi2} for Girard's original dilators on linear orders).

\begin{remark}\label{rmk:equiv-wpo}
It is known that the following conditions on a partial order~$X$ are equivalent over relatively weak theories but not all over~$\mathsf{RCA}_0$ (see~\cite{cholak-RM-wpo}):
\begin{enumerate}[label=(\roman*)]
\item Any infinite sequence~$f:\mathbb N\to X$ admits~$i<j$ with~$f(i)\leq_X f(j)$.
\item Whenever~$B$ is a subset of the underlying set of~$X$, there is a finite~$B_0\subseteq B$ such that each~$x\in B$ admits an~$x_0\in B_0$ with~$x_0\leq_X x$.
\item There is no infinitely descending sequence and no infinite antichain in~$X$.
\item Every linear extension of~$X$ is a well-order.
\item For any~$f:\mathbb N\to X$, there is an infinite set~$P\subseteq\mathbb N$ such that~$f(i)\leq_X f(j)$ holds for all~$i<j$ in $P$.
\end{enumerate}
We have taken~(i) as our definition of well-partial-orders. Let us now show:
\begin{enumerate}[label=(\alph*)]
\item Theorem~\ref{thm:WKL-wpo} remains valid when we adopt~(ii), (iii) or~(iv) as the definition of well-partial-orders.
\item Our theorem also remains valid when we adopt~(v) and extend the base theory~$\mathsf{RCA}_0$ by the infinite pigeonhole principle (which holds in $\omega$-models).
\end{enumerate}
Over~$\mathsf{RCA}_0$, condition~(i) is equivalent to~(ii) and implies~(iii) and~(iv). To obtain~(a), we show that $\mathsf{RCA}_0$ proves the converse implications for the orders $\mathsf S(Y)$ and~$W(X)$ from the previous proof (while a stronger base theory is needed for general orders). For orders of the form~$W(X)$, this is true since these orders are linear. Let us now assume that~(i) fails for an order of the form~$\mathsf S(Y)$. We derive that~(iii) and~(iv) fail as well. Given $\sigma^0,\sigma^1,\ldots$ with $\sigma^i\not\leq_{\mathsf S(Y)}\sigma^j$ for~$i<j$, we may assume that we have $|\sigma^0|<|\sigma^1|<\ldots$, as in the previous proof. This immediately yields $\sigma^i\not\geq_{\mathsf S(Y)}\sigma^j$ for~$i<j$, so that we have an antichain that violates~(iii). We can also conclude that $\Sigma=\{\sigma^i\,|\,i\in\mathbb N\}$ exists as a set. Crucially, we get $\sigma^i\not<_{\mathsf S(Y)}\tau$ for any~$\tau$ (since otherwise $\sigma^i<_{\mathsf S(Y)}\sigma^j$ when~$|\sigma^j|\geq|\tau|$). Let~$\prec_0$ be any linear extension of~$<_{\mathsf S(Y)}$ on the set~$\mathsf S(Y)\backslash\Sigma$ (see Observation~6.1 of~\cite{downey98} for a construction in~$\mathsf{RCA}_0$). To~get a linearization on all of~$\mathsf S(Y)$, we can now set
\begin{equation*}
    \rho\prec\tau\quad\Leftrightarrow\quad\begin{cases}
        \text{either }\rho,\tau\in \mathsf S(Y)\backslash\Sigma\text{ and }\rho\prec_0\tau,\\
        \text{or }\rho\in \mathsf S(Y)\backslash\Sigma\text{ and }\tau\in\Sigma,\\
        \text{or }\rho=\sigma^j\text{ and }\tau=\sigma^i\text{ with }i<j.
    \end{cases}
\end{equation*}
Clearly, the $\sigma^i$ witness that~$\prec$ is ill-founded, so that~(iv) fails. To establish~(b), we need to show that~(i) implies~(v) for the orders~$\mathsf S(Y)$ and~$W(X)$, using the pigeonhole principle. The latter ensures that~(v) holds for any sequence with finite range. Given a sequence~$n_0,n_1,\ldots\subseteq W(X)$, we may thus assume that $n_i<_{\mathbb N}n_{i+1}$ holds for all~$i\in\mathbb N$. If~(v) fails, the set~$J$ from~(\ref{eq:W(X)}) must thus be equal to~$\mathbb N$. But then~(i) fails as well. Similarly, it suffices to consider a sequence~$\sigma^0,\sigma^1,\ldots\subseteq \mathsf S(Y)$ with $|\sigma^i|<|\sigma^{i+1}|$ for all~$i\in\mathbb N$. Assuming~(i), we find~$i(0)<j(0)<i(1)<j(1)<\ldots$ such that all~$k\in\mathbb N$ validate~$\sigma^{i(k)}<_{\mathsf S(Y)}\sigma^{j(k)}$. We get~$\sigma^{i(k)}<_{\mathsf S(Y)}\sigma^{i(k+1)}$ due to the definition of~$\mathsf S(Y)$. This yields the infinitely ascending sequence required by~(v).
\end{remark}

\section{Conservativity for isolated existence}\label{sect:isolated}

This section is devoted to a proof and an application of the following theorem. A definition of the isolated existence quantifier~$\exists^i$ can be found in the introduction of the present paper.

\begin{theorem}\label{thm:isolated-existence}
    The theory~$\mathsf{WKL}_0$ is conservative over~$\mathsf{RCA}_0$ for statements of the form~$\forall X(\varphi(X)\to\exists^i Y\,\psi(X,Y))$ with~$\varphi\in\Sigma^1_1$ and arithmetical~$\psi$.
\end{theorem}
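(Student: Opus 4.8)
The plan is to argue semantically, following the proof of Theorem~\ref{thm:cons-wo-princ}. By completeness and the downward L\"owenheim-Skolem theorem it is enough to fix a countable $\mathcal M\vDash\mathsf{RCA}_0$, assume that $\mathsf{WKL}_0$ proves a statement $\forall X(\varphi(X)\to\exists^iY\,\psi(X,Y))$ as in the theorem, and show that this statement holds in~$\mathcal M$. By Theorem~\ref{thm:hypfree-extend} we have a hyperimmune-free $\omega$-extension $\mathcal N\vDash\mathsf{WKL}_0$; write~$M$ for the common first-order part. Consider an $X\in\mathcal M$ with $\mathcal M\vDash\varphi(X)$. Since $\varphi\in\Sigma^1_1$ and any witnessing set of~$\mathcal M$ also lies in~$\mathcal N$, we get $\mathcal N\vDash\varphi(X)$ and hence $\mathcal N\vDash\exists^iY\,\psi(X,Y)$: there are $Y\in\mathcal N$ and $n\in M$ such that $Y$ is the unique $Z\in\mathcal N$ with $Z[n]=Y[n]$ and $\mathcal N\vDash\psi(X,Z)$. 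Two absoluteness facts then close the argument once a witness has been located in~$\mathcal M$: arithmetical formulas with parameters from~$\mathcal M$ have the same truth value in~$\mathcal M$ and in~$\mathcal N$ (the first-order parts coincide), while $\Pi^1_1$-sentences with such parameters descend from~$\mathcal N$ to~$\mathcal M$ (the second-order part of~$\mathcal N$ is the larger one). It therefore suffices to produce a $Y^\ast\in\mathcal M$ that~$\mathcal N$ still sees as a $\psi(X,\cdot)$-witness which is isolated with some parameter from~$M$; for then $\mathcal M\vDash\psi(X,Y^\ast)$ and $\mathcal M\vDash\exists m\,\forall Z(Z[m]=Y^\ast[m]\land\psi(X,Z)\to Z=Y^\ast)$, so that $\mathcal M\vDash\exists^iY\,\psi(X,Y)$. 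The whole difficulty lies in certifying that the isolated witness already belongs to the ground model.

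For arbitrary arithmetical~$\psi$ I would take $Y^\ast=Y$ and invoke the technical theorem of Simpson, Tanaka and Yamazaki~\cite{simpson-tanaka-yamazaki}. Although their advertised conservativity is stated for sentences $\forall X\exists!Y\,\psi$, the forcing lemma behind it says more: the Harrington-style forcing --- which is exactly the Jockusch-Soare forcing underlying Section~\ref{sect:hyp-free} --- cannot add to~$\mathcal N$ a set that is uniquely characterised by an arithmetical condition over a parameter from~$\mathcal M$; any such set is already a set of~$\mathcal M$. Relative to the finite datum $Y[n]\in M$, the isolated witness~$Y$ is of exactly this kind, being the unique~$Z$ with $\psi(X,Z)$ and $Z[n]=Y[n]$; hence $Y\in\mathcal M$ and the first paragraph applies. (To match STY's printed statement literally one first folds the $\Sigma^1_1$-witness into the set parameter, reducing to arithmetical~$\varphi$, then absorbs~$\varphi$ and the parameter into the matrix by passing to $(\psi\land\varphi)\lor(Y=\emptyset\land\neg\varphi)$ --- still arithmetical, still isolated precisely when the original is, and with a trivial witness when~$\varphi$ fails --- and finally eliminates~$\exists^i$ by a pairing trick; the semantic route above is more direct.)

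When $\psi\in\Sigma^0_3$ there is a self-contained argument reusing the domination idea from the proof of Theorem~\ref{thm:cons-wo-princ}. Write $\psi(X,Z)\equiv\exists a\,\forall b\,\exists c\,\theta(X,Z,a,b,c)$ with bounded~$\theta$, and fix in~$\mathcal N$ a witness $a^\ast\in M$ for~$Y$, so $\mathcal N\vDash\forall b\,\exists c\,\theta(X,Y,a^\ast,b,c)$. The associated Skolem function is a function of~$\mathcal N$, hence --- by hyperimmune-freeness --- is dominated by some nondecreasing~$\bar g$ of~$\mathcal M$, which we enlarge so that~$\bar g(b)$ also bounds the use of the set argument of $\theta(X,\cdot,a^\ast,b,c)$ for $c\leq\bar g(b)$. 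Working inside~$\mathcal M$, let~$S$ be the tree consisting of the initial segments of~$Y[n]$ together with all $\sigma\sqsupseteq Y[n]$ satisfying $\forall b\,(\bar g(b)\leq|\sigma|\to\exists c\leq\bar g(b)\,\theta(X,\sigma,a^\ast,b,c))$; this is $\Delta^0_1$ in the parameters $X,\bar g,a^\ast\in\mathcal M$. By the choice of~$\bar g$, every branch of~$S$ is a $\psi(X,\cdot)$-witness extending~$Y[n]$. Hence, in~$\mathcal N$, the set~$Y$ is a branch of~$S$ --- so~$S$ is infinite, a $\Pi^0_1$-fact about~$S$ that transfers to~$\mathcal M$ --- and, by isolation,~$Y$ is the \emph{unique} branch of~$S$ in~$\mathcal N$. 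Using weak K\H{o}nig's lemma in~$\mathcal N$, one then shows that for each $n'\in M$ some level of~$S$ has all its strings agreeing on the first~$n'$ bits: otherwise, using that finiteness is $\Sigma^0_1$ for such subtrees together with bounded collection, one of the subtrees $\{\sigma\in S\mid\sigma\sqsubseteq\rho\text{ or }\rho\sqsubseteq\sigma\}$ with $\rho\in2^{n'}$ and $\rho\neq Y[n']$ would be infinite and would therefore carry a branch of~$S$ different from~$Y$. This ``stabilisation'' claim is arithmetical in~$S$, so it holds in~$\mathcal M$ as well; thus~$\mathsf{RCA}_0$ forms the set~$P$ that it defines, namely by letting~$P[n']$ be the common length-$n'$ prefix of~$S$ at the least stabilising level $\geq n'$. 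By construction $\mathcal N\vDash P=Y$, so $P\in\mathcal M$ is the desired witness~$Y^\ast$.

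The delicate point, I expect, is the totality inside~$\mathcal M$ of the stabilisation map from which~$P$ is read off. This is not an effective property of~$S$ alone but the classical observation that a nonempty $\Pi^0_1$ class with a unique member has a computable member; and~$\mathcal M$ only gains access to it because the relevant compactness argument can be run in~$\mathcal N\vDash\mathsf{WKL}_0$ and then transported back by arithmetical absoluteness. The routine but fiddly part is arranging that the genuine isolated witness~$Y$ really is a branch of~$S$, which dictates the bookkeeping on~$\bar g$ and on the use of~$\theta$. For arithmetical~$\psi$ beyond~$\Sigma^0_3$ these tree manipulations break down (higher Skolem functions need not be functions of~$\mathcal N$ in any obvious way), and the only substantial step is to isolate from~\cite{simpson-tanaka-yamazaki} the exact forcing lemma and to verify that ``uniquely definable over a ground-model parameter'' may be relaxed to ``isolated'', which their construction accommodates without change.
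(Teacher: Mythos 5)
Your overall strategy coincides with the paper's: both rest on the observation that an isolated witness is a \emph{unique} witness relative to the first-order parameter $Y[n]$, and the paper likewise gives two proofs -- one for general arithmetical~$\psi$ via Simpson--Tanaka--Yamazaki, and a self-contained one for $\psi\in\Sigma^0_3$ via hyperimmune-free extensions. For the general case, however, what you leave as a black box is precisely the content of the paper's argument, and your description of that black box is not what \cite{simpson-tanaka-yamazaki} actually provide. There is no single-extension ``forcing lemma'' asserting that the generic extension adds no set uniquely characterised by an arithmetical condition over ground-model parameters; what their Corollary~5.16 gives is \emph{two} $\omega$-extensions $\mathcal N_0,\mathcal N_1\vDash\mathsf{WKL}_0$ whose common sets are exactly those of~$\mathcal M$ and which satisfy the same second-order sentences with parameters from~$\mathcal M$. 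One then argues: by elementary equivalence each~$\mathcal N_i$ contains a witness~$Y_i$ isolated by the same finite~$\sigma$, membership in~$Y_i$ is defined in~$\mathcal N_i$ by $\exists Y(\sigma\sqsubset Y\land\psi(X,Y)\land n\in Y)$, these definitions agree across the two models, so $Y_0=Y_1$ lies in both and hence in~$\mathcal M$. Your reduction of ``isolated'' to ``unique over a parameter'' is exactly right, but without this two-model (or an equivalent homogeneity) argument the step ``hence $Y\in\mathcal M$'' is asserted rather than proved; that is the one genuine gap.

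Your $\Sigma^0_3$ argument is correct and is a legitimate variant of the paper's alternative proof. Both use hyperimmune-freeness to dominate a Skolem function and then exploit that the unique path of a bounded tree is computable, with $\mathsf{WKL}_0$ in~$\mathcal N$ supplying the uniqueness input and arithmetical absoluteness transporting the resulting combinatorial facts down to~$\mathcal M$. The paper builds a tree of tuples of minimal witnesses and reads off the path by depth-first search in the Kleene--Brouwer order; you build a binary tree $S\subseteq 2^{<\omega}$ whose unique branch is~$Y$ and read it off via stabilisation levels. Your version is arguably cleaner. Two pieces of bookkeeping should be made explicit: arrange $\bar g(b)\geq b$ so that the defining condition of~$S$ is genuinely bounded (hence $\Delta^0_1$), and note that the use bound on~$\theta$ is what makes~$S$ closed under initial segments and makes every branch a genuine $\psi$-witness.
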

\begin{proof}
    Let us assume~$\mathsf{WKL}_0\vdash\forall X(\varphi(X)\to\exists^i Y\,\psi(X,Y))$. We consider a countable model~$\mathcal M\vDash\mathsf{RCA}_0$. For a given~$X\in\mathcal M$ with $\mathcal M\vDash\varphi(X)$, we invoke Corollary~5.16 of~\cite{simpson-tanaka-yamazaki} to find two $\omega$-extensions $\mathcal N_i\vDash\mathsf{WKL}_0$ of~$\mathcal M$ such that
    \begin{enumerate}[label=(\roman*)]
    \item $\mathcal M$ contains all sets that lie in both~$\mathcal N_0$ and~$\mathcal N_1$,
    \item $\mathcal N_0$ and~$\mathcal N_1$ satisfy the same second-order sentences with number and set parameters from~$\mathcal M$.
    \end{enumerate}
    We get $\mathcal N_i\vDash\varphi(X)$ since~$\varphi$ is~$\Sigma^1_1$. Consider a~$Y_0\in\mathcal N_0$ with $\mathcal N_0\vDash\psi(X,Y_0)$ that is isolated by a coded sequence~$\sigma\in 2^{<\omega}$ in the shared first-order part of our models. The latter means that we have
    \begin{equation}\label{eq:N_0-one}
        \mathcal N_0\vDash\sigma\sqsubset Y_0\land\forall Z\big(\sigma\sqsubset Z\land\psi(X,Z)\to Y_0=Z),
    \end{equation}
    where $\sigma\sqsubset Y$ amounts to $\sigma=Y[|\sigma|]$. For $n$ in the first-order part, we get
    \begin{equation}\label{eq:N_0-two}
        n\in Y_0\quad\Leftrightarrow\quad\mathcal N_0\vDash\exists Y(\sigma\sqsubset Y\land\psi(X,Y)\land n\in Y).
    \end{equation}
    In view of~(ii), there must also be a~$Y_1\in\mathcal N_1$ with $\mathcal N_1\vDash\psi(X,Y_1)$ that is isolated by the same~$\sigma$, i.\,e., such that (\ref{eq:N_0-one}) and~(\ref{eq:N_0-two}) hold with index~$1$ at the place of~$0$. The two versions of~(\ref{eq:N_0-two}) together with~(ii) ensure~$Y_0=Y_1$. We thus get~$Y_0\in\mathcal M$ due to~(i). Downward absoluteness yields~$\mathcal M\vDash\psi(X,Y_0)$. Let us note that we obtain
\begin{equation*}
\mathsf{RCA}_0\vdash\forall X\big(\varphi(X)\to\exists Y\,\psi(X,Y)\big)
\end{equation*}
even when~$\psi$ is~$\Pi^1_1$. When it is arithmetical, the statement in~(\ref{eq:N_0-one}) is~$\Pi^1_1$ and hence downward absolute, so that $\exists$ may be strengthened to~$\exists^i$.
\end{proof}

The previous proof is similar to the proof of Theorem~5.17 from~\cite{simpson-tanaka-yamazaki} but adapts it to the case of isolated rather than unique existence. It relies on a forcing construction from~\cite{simpson-tanaka-yamazaki} that is much more difficult than the one in Section~\ref{sect:hyp-free} above. This makes it worthwhile to give a new proof of the following case by our methods.

\begin{proof}[Alternative Proof of Theorem~\ref{thm:isolated-existence} for $\psi\in\Sigma^0_3$]
    By the Kleene normal form theorem, we find a bounded formula~$\theta$ with
    \begin{equation*}
        \mathsf{RCA}_0\vdash\psi(X,Y)\leftrightarrow\exists k\forall m\exists n\,\theta(k,m,X,Y[n]).
    \end{equation*}
    We may assume that $\theta(k,m,X,\sigma)$ and $\sigma\sqsubset\tau\in 2^{<\omega}$ entail~$\theta(k,m,X,\tau)$.
    
Given a countable model~$\mathcal M\vDash\mathsf{RCA}_0$ and some~$X\in\mathcal M$ with $\mathcal M\vDash\varphi(X)$, we use Theorem~\ref{thm:hypfree-extend} to get a hyperimmune-free $\omega$-extension~$\mathcal N\vDash\mathsf{WKL}_0$. Assuming that we have~$\mathsf{WKL}_0\vdash\varphi(X)\to\exists^iY\,\psi(X,Y)$, we pick a~$Y\in\mathcal N$ with $\mathcal N\vDash\psi(X,Y)$ and a sequence~$\sigma$ that isolates it (as in~(\ref{eq:N_0-one}) from the previous proof). For a suitable~$k$ from the shared first-order part, we thus have
    \begin{equation*}
        \mathcal N\vDash\forall Z\big(Y=Z\leftrightarrow\sigma\sqsubset Z\land\forall m\exists n\,\theta(k,m,X,Y[n])\big).
    \end{equation*}
    Within~$\mathcal M$, we define~$T$ as the tree that consists of all sequences~$\langle\tau^0,\ldots,\tau^{m-1}\rangle$ with~$\sigma\sqsubset\tau^0\sqsubset\ldots\sqsubset\tau^{m-1}\in 2^{<\omega}$ such that all~$i<m$ validate the following:
    \begin{enumerate}[label=(\roman*)]
    \item we have $\theta(k,i,X,\tau^i)$,
    \item we have $\neg\theta(k,i,X,\tau)$ when $\tau^{i-1}\sqsubset\tau\sqsubset\tau^i$ (read $\tau^{-1}=\sigma$).
    \end{enumerate}
    In~$\mathcal N$, one obtains a path if one recursively takes the shortest~$\tau^i$ with $\tau^{i-1}\sqsubset\tau^i\sqsubset Y$ that satisfies~$\theta(k,i,X,\tau^i)$. To see that this is the only path, assume we have a sequence $\rho^0,\rho^1,\ldots$ in~$\mathcal N$ such that $\langle\rho^0,\ldots,\rho^{m-1}\rangle\in T$ holds for all~$m\in\mathbb N$. Take~$Z$ such that all~$i\in\mathbb N$ validate~$\rho^i\sqsubset Z$. Then~(i) witnesses~$\forall m\exists n\,\theta(k,m,X,Z[n])$, so that we get~$Y=Z$. In view of~(ii), we can use induction on~$i$ to show that $\rho^i$ coincides with~$\tau^i$ from the path that we have constructed.

    As our $\omega$-extension is hyperimmune-free, we have an~$f\in\mathcal M$ with $\tau^i<_{\mathbb N}f(i)$, where~$i\mapsto\tau^i$ is the path from the previous paragraph. Consider the bounded tree
    \begin{equation*}
        T_0=\big\{\langle\rho^0,\ldots,\rho^{m-1}\rangle\in T\,\big|\,\rho^i<_{\mathbb N}f(i)\text{ for all }i<m\big\}\in\mathcal M.
    \end{equation*}
    Essentially, the result now follows from the fact that unique branches are computable. To give a more explicit argument, we consider the function $g:\mathbb N\to T_0$ in~$\mathcal M$ that traverses~$T$ by depth-first search, moving to the right first. In terms of the Kleene-Brouwer order~$<_{\mathsf{KB}}$, this means that~$g$ is a descending enumeration of an end segment. We can infer that~$g$ does not move to the left of our path, i.\,e., that we have
    \begin{equation}\label{eq:g-right}
        \langle\tau^0,\ldots,\tau^{i-1}\rangle\leq_{\mathsf{KB}}g(i).
    \end{equation}
    The latter remains valid when we pass to a subsequence with~$|g(i)|\geq i$. For~$m\leq i$, let $g(i,m)\sqsubseteq g(i)$ be the initial segment that has length~$|g(i,m)|=m$. We say that $i$~is~$m$-true if no~$\rho\in T_0$ of height~$i$ lies to the left of~$g(m,i)$, i.\,e., if we have
    \begin{equation}\label{eq:left-finite}
        \rho<_{\mathsf{KB}}g(i,m)\text{ and }|\rho|=i\quad\Rightarrow\quad g(i,m)\sqsubset\rho.
    \end{equation}
    Note that this is decidable since~$T_0$ is bounded. To see that each~$m$ admits an~$i$ that is $m$-true, first take an~$i_0$ such that $g(i,m)=g(i_0,m)$ holds for all~$i\geq i_0$. In particular, the bounded tree~$\{\rho\in T_0\,|\,g(i_0,m)\sqsubseteq\rho\}$ is infinite and must thus have a path in~$\mathcal N$. Since the path $i\mapsto\tau^i$ is unique, we thus get $g(i_0,m)=\langle\tau^0,\ldots,\tau^{m-1}\rangle$. It follows that the tree
    \begin{equation*}
        T'=\big\{\rho\in T_0\,\big|\,\rho\sqsubset g(i_0,m)\big\}\cup\big\{\rho\in T_0\,\big|\,\rho<_{\mathsf{KB}}g(i_0,m)\text{ and }g(i_0,m)\not\sqsubset\rho\big\}
    \end{equation*}
    must be finite. Take an~$i\geq i_0$ such that $\rho\in T'$ implies~$|\rho|<i$. It is straightforward to conclude that this validates~(\ref{eq:left-finite}). We now show that we have
    \begin{equation*}
        g(i,m)=\big\langle\tau^0,\ldots,\tau^{m-1}\big\rangle\quad\text{when~$i$ is $m$-true}.
    \end{equation*}
    In view of~(\ref{eq:g-right}), we at least get~$\geq_{\mathsf{KB}}$ at the place of the desired equality. If we had a strict inequality $>_{\mathsf{KB}}$, then~$\rho=\langle\tau^0,\ldots,\tau^{i-1}\rangle$ would validate the premise of~(\ref{eq:left-finite}). Due to the conclusion of the latter, we would get our equality after all. By searching for~$i$ that are~$m$-true, we now learn that the path~$m\mapsto\tau^m$ lies in~$\mathcal M$. We have previously seen~$\tau^0\sqsubset\tau^1\sqsubset\ldots\sqsubset Y$, so that we get~$Y\in\mathcal M$. The statement that~$Y$ is isolated by~$\sigma$ is~$\Pi^1_1$ (as in the previous proof). We can thus conclude~$\mathcal M\vDash\exists^iY\,\psi(X,Y)$, as desired.
\end{proof}

The fact that unique paths in bounded trees are computable is one central ingredient of the previous proof. This yields an interesting contrast with a comment of D.~Hirschfeldt, who mentions the computability of unique paths as an example for ``a result of computable mathematics [that does not have] a reverse mathematical analog" (see Section~4.7 of~\cite{hirschfeldt-slicing}).

In the rest of this section, we discuss an application to continuous functions with isolated singularities. To handle continuous functions $f:D_f\to\mathbb R$ with $D_f\subseteq\mathbb R$ in the context of~$\mathsf{RCA}_0$, we use the standard encoding from Definition~II.6.1 of~\cite{simpson09}. Let us recall that codes are sets~$\Phi\subseteq\mathbb N\times\mathbb Q^4$ of tuples with strictly positive third and fifth component. The idea is that~$f$ is coded by~$\Phi$ if we have $f(B_\delta(a))\subseteq\overline B_\varepsilon(b)$ whenever there is an~$n$ with $(n,a,\delta,b,\varepsilon)\in\Phi$ (where $B_\delta(a)=\{x\,:\,|x-a|<\delta\}$). For any $\Phi$ that satisfies suitable coherence conditions, we put
\begin{multline*}
    D_\Phi=\big\{x\in\mathbb R\,\big|\,\text{each $\varepsilon>0$ in~$\mathbb Q$ admits $n,a,\delta,b$ with}\\
    \text{$(n,a,\delta,b,\varepsilon)\in\Phi$ and $|x-a|<\delta$}\big\}.
\end{multline*}
Here real numbers are given via the standard encoding as fast converging Cauchy sequences (see Definition~II.4.4 of~\cite{simpson09}). Within $\mathsf{RCA}_0$, one shows that each~$x\in D_\Phi$ admits a unique value~$f_\Phi(x)\in\mathbb R$ that is characterized by
\begin{equation*}
    |x-a|<\delta\text{ and }(n,a,\delta,b,\varepsilon)\in\Phi\text{ for some }n,a,\delta\quad\Rightarrow\quad\big|f_\Phi(x)-b\big|\leq\varepsilon.
\end{equation*}
Concerning the use of $<$ and~$\leq$, we recall that strict and weak comparisons in~$\mathbb R$ are $\Sigma^0_1$ and~$\Pi^0_1$, respectively. This means that $f_\Phi(x)=y$ is a $\Pi^0_1$-formula. When we refer to a continuous function $f:D_f\to\mathbb R$ in the sequel, we assume that it is given via a~$\Phi$ with $f=f_\Phi$, though we usually leave~$\Phi$ implicit.

Weak K\H{o}nig's lemma is equivalent to the principle that any continuous function $f:[0,1]\to\mathbb R$ is bounded, over~$\mathsf{RCA}_0$. Simpson, Tanaka and Yamazaki~\cite{simpson-tanaka-yamazaki} are careful to explain why this does not contradict their conservativity result for unique existence, even though one has the supremum as a unique upper bound. In fact, one can already make the point for Harrington's result, as we can demand an upper bound from~$\mathbb Q$ (so that we have a number quantifier). In any case, no contradiction arises, since we get another existential quantifier from the universal premise that every~$x\in[0,1]$ lies in the domain of~$f$. For a suitable class of functions that are guaranteed to be total, on the other hand, Harrington's result does yield the boundedness principle over~$\mathsf{RCA}_0$. In the following, we use Theorem~\ref{thm:isolated-existence} to extend this observation to functions with the following property.

\begin{definition}
Consider a continuous function~$f:D_f\to\mathbb R$. Elements of~$\mathbb R\backslash D_f$ will be called singularities of~$f$. We say that a singularity~$x$ of~$f$ is isolated if there is an~$\varepsilon>0$ such that $|x-y|\in(0,\varepsilon)$ implies~$y\in D_f$.
\end{definition}

We can now make the promised result precise.

\begin{theorem}
For a $\Sigma^1_1$-formula~$\varphi$ with
\begin{equation*}
\mathsf{RCA}_0\vdash\text{``every continuous~$f:D_f\to\mathbb R$ with $\varphi(f)$ has only isolated singularities"},
\end{equation*}
we also obtain
\begin{multline*}
\mathsf{RCA}_0\vdash\text{``every continuous $f:D_f\to\mathbb R$ with $\varphi(f)$ is bounded on}\\
\text{closed intervals that are contained in~$D_f$"}.
\end{multline*}
\end{theorem}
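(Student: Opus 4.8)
The plan is to recognise the desired conclusion as an instance of the template in Theorem~\ref{thm:isolated-existence} and to verify the hypothesis of that theorem by a compactness argument inside~$\mathsf{WKL}_0$. Let $X$ range over codes of triples $(\Phi,a,b)$ with $\Phi\subseteq\mathbb N\times\mathbb Q^4$ a continuous-function code and $a<b$ reals (or rationals, which is all one needs), and abbreviate $f=f_\Phi$. Let $\psi(X,Y)$ be the arithmetical statement that $Y$ is a singleton $\{k\}$ whose element~$k$ codes a rational~$q$ together with finitely many tuples $(n_i,c_i,\delta_i,b_i,\varepsilon_i)\in\Phi$ such that $|b_i|+\varepsilon_i\leq q$ for all~$i$ and such that the finitely many intervals $(c_i-\delta_i,c_i+\delta_i)$ cover~$[a,b]$. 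Since a finite union of rational open intervals covering a closed interval is an arithmetical (indeed essentially decidable) condition, $\psi$ is arithmetical, and any $Y$ satisfying $\psi(X,Y)$ is isolated by the initial segment~$Y[k+1]$, so that $\exists^iY\,\psi$ and $\exists Y\,\psi$ coincide for this~$\psi$.

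The subtle point is the choice of the $\Sigma^1_1$-premise. The naive premise ``$\varphi(f)\land a<b\land[a,b]\subseteq D_\Phi$'' is not~$\Sigma^1_1$, because for an arbitrary code~$\Phi$ the clause ``$[a,b]\subseteq D_\Phi$'' is genuinely~$\Pi^1_1$. This is exactly where the hypothesis enters: since $\mathsf{RCA}_0$ proves that every~$f$ with~$\varphi(f)$ has only isolated singularities, it proves that the closed set $\mathbb R\setminus D_\Phi$ is discrete, hence that its points can be enumerated effectively from~$\Phi$ as a (possibly finite) sequence $s_0,s_1,\ldots$ of reals carrying positive rational isolating radii. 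I would use this to show that, under the assumption~$\varphi(f)$, the clause ``$[a,b]\subseteq D_\Phi$'' is $\mathsf{RCA}_0$-provably equivalent to the arithmetical statement $\alpha(\Phi,a,b)$ asserting that every rational of~$[a,b]$ lies in~$D_\Phi$ and that no~$s_j$ lies in~$[a,b]$. One then takes the premise to be $\varphi'(X):\equiv\varphi(f_\Phi)\land a<b\land\alpha(\Phi,a,b)$, which is~$\Sigma^1_1$.

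Next I would check that $\mathsf{WKL}_0\vdash\forall X\big(\varphi'(X)\to\exists Y\,\psi(X,Y)\big)$. Arguing in~$\mathsf{WKL}_0$ and assuming~$\varphi'(X)$, the equivalence above (inherited from~$\mathsf{RCA}_0$) yields~$[a,b]\subseteq D_\Phi$; since each~$x\in[a,b]$ then lies in some basic ball occurring in~$\Phi$, the family of these balls is an open cover of the compact interval~$[a,b]$, so Heine--Borel provides a finite subcover, and taking~$q$ to be the maximum of the finitely many values~$|b_i|+\varepsilon_i$ produces a code~$Y$ with~$\psi(X,Y)$. Theorem~\ref{thm:isolated-existence} now gives $\mathsf{RCA}_0\vdash\forall X\big(\varphi'(X)\to\exists^iY\,\psi(X,Y)\big)$. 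To finish, I would run this backwards in~$\mathsf{RCA}_0$: given continuous~$f=f_\Phi$ with~$\varphi(f)$ and an interval~$[a,b]\subseteq D_\Phi$, the equivalence gives~$\alpha(\Phi,a,b)$, hence~$\varphi'(X)$, hence some~$Y$ with~$\psi(X,Y)$; unpacking~$Y$ one gets finitely many tuples from~$\Phi$ whose balls cover~$[a,b]$, and for any~$x\in[a,b]\subseteq D_\Phi$ one of these balls contains~$x$, whence~$|f(x)|\leq q$. For closed intervals with arbitrary real endpoints one either carries out the same argument with~$a,b$ real or shrinks such an interval into one with rational endpoints that is still contained in~$D_\Phi$, again using discreteness of the singularity set.

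The step I expect to be the main obstacle is the one highlighted above: extracting, inside~$\mathsf{RCA}_0$ and from the purely proof-theoretic hypothesis ``$\mathsf{RCA}_0\vdash\varphi(f)\to$ isolated singularities'', an honest arithmetical formula~$\alpha$ equivalent to ``$[a,b]\subseteq D_\Phi$''. Concretely this requires showing that the singularity set is not merely discrete but effectively \emph{locatable} from~$\Phi$ (in the vein of ``a $\Pi^0_1$ singleton in~$\mathbb R$ is computable'') and that the resulting enumeration is provably complete, at least over~$\mathsf{WKL}_0$, so that~$\alpha$ really captures the $\Pi^1_1$-clause; once this is in place, the remainder is routine coding together with a textbook Heine--Borel argument.
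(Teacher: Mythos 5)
You have correctly identified the central obstacle: the clause ``$[a,b]\subseteq D_\Phi$'' is genuinely $\Pi^1_1$ and cannot serve as part of a $\Sigma^1_1$-premise in an application of Theorem~\ref{thm:isolated-existence}. The repair you propose, however, is not available. Replacing this clause by an arithmetical $\alpha$ requires a provably complete, effective enumeration $s_0,s_1,\ldots$ of the singularities, computable from~$\Phi$ inside~$\mathsf{RCA}_0$. But the singularity set $\mathbb R\setminus D_\Phi$ is only $\Sigma^0_2$ (not effectively closed), and neither the computability of an isolated $\Sigma^0_2$ point nor a uniform, provably complete enumeration of all such points is available in~$\mathsf{RCA}_0$. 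The analogy to ``$\Pi^0_1$ singletons are computable'' does not apply, precisely because the complexity here is one level higher. Without such an enumeration, the direction $\alpha\Rightarrow[a,b]\subseteq D_\Phi$ already fails semantically: every rational of~$[a,b]$ can lie in~$D_\Phi$ while an irrational isolated singularity sits inside~$[a,b]$, so the clause ``no $s_j$ lies in $[a,b]$'' is essential and yet unobtainable.

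The paper avoids this entirely by arguing contrapositively. Instead of arithmetizing the domain condition in the premise, it places there the arithmetical statement $\varphi'(f)$ that $f$ is unbounded on $[a,b]\cap D_f$ (the paper carefully checks, using the isolated-singularity hypothesis, that unboundedness on $[a,b]\cap D_f$ is equivalent to unboundedness on the rational points of that set, hence arithmetical). The conclusion then becomes ``$f$ has an isolated singularity in~$[a,b]$'', which after encoding reals by binary expansions $Y\mapsto r^Y$ is a genuine $\exists^iY$-statement with $r^Y\notin D_f$ of low arithmetical complexity. Theorem~\ref{thm:isolated-existence} applies directly, and contraposing yields boundedness on closed subintervals of~$D_f$. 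In short: the isolation hypothesis should be exploited to \emph{witness} the $\exists^i$-conclusion, not to \emph{reduce} the $\Pi^1_1$ premise to an arithmetical one, which is what your route would require and which does not go through.
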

\begin{proof}
Let~$\varphi'(f)$ express that ``$f:D_f\to\mathbb R$ is a continuous function with $\varphi(f)$ that is unbounded on $[a,b]\cap D_f$" (with free variables~$a$ and~$b$). Given weak K\H{o}nig's lemma, $\varphi'(f)$ entails that we have~$[a,b]\not\subseteq D_f$, i.\,e., that $f$ has a singularity in~$[a,b]$. The assumption of the theorem guarantees that all singularities are isolated. We thus get
\begin{equation*}
\mathsf{WKL}_0\vdash\forall f\big(\varphi'(f)\to\text{``$f$ has an isolated singularity in~$[a,b]$"}\big).
\end{equation*}
We cannot apply Theorem~\ref{thm:isolated-existence} immediately, because the same singularity is represented by different Cauchy sequences, so that the representations are not isolated in the required sense. This obstacle can be overcome via binary expansions. For convenience, we assume~$[a,b]=[0,1]$. Each~$Y\subseteq\mathbb N$ determines a real~$r^Y\in[0,1]$ that is given as the fast Cauchy sequence
\begin{equation*}
r^Y=\big(r^Y_n\big)\quad\text{with}\quad r^Y_n=\textstyle\sum_{i<n}\,2^{-i-1}\cdot\chi_Y(i).
\end{equation*}
One proves in~$\mathsf{RCA}_0$ that any real~$x\in[0,1]$ is equal to one of the form~$r^Y$. Indeed, this is straightforward when~$x$ is rational. When it is not, one can decide in which rational intervals~$x$ is contained (since~$<$ and~$\leq$ on~$\mathbb R$ are $\Sigma^0_1$ and $\Pi^0_1$, respectively). This makes it possible to compute the desired~$Y$. Let us observe
\begin{equation*}
\big|r^Y-r^Z\big|>2^{-n}\quad\Rightarrow\quad Y[n]\neq Z[n].
\end{equation*}
To see that $Y\subseteq\mathbb N$ is isolated when the same holds for~$r^Y$, we must also consider the case where~$r^Y=r^Z$ holds for some~$Z\neq Y$. There must then be an~$N\in\mathbb N$ (which depends only on~$Y$) such that~$\chi_Y(n)=\chi_Y(N)$ holds for all~$n\geq N$. It is straightforward to see that we must have~$Y[N]\neq Z[N]$. We thus get
\begin{equation*}
\mathsf{WKL}_0\vdash\forall f\big(\varphi'(f)\to\exists^iY:r^Y\notin D_f\big).
\end{equation*}
In order to apply Theorem~\ref{thm:isolated-existence}, we must check that~$\varphi'$ is~$\Sigma^1_1$. The crucial condition that $f$ is unbounded on~$[0,1]\cap D_f$ is equivalent to the arithmetical statement
\begin{equation*}
\forall N\in\mathbb N\,\exists q\in\mathbb Q\,\big(q\in[0,1]\cap D_f\land |f(q)|\geq N\big).
\end{equation*}
For the crucial direction of this equivalence, we assume that we have~$f(x)>N$ for some real~$x\in[0,1]\cap D_f$. Pick a rational~$\varepsilon\leq(f(x)-N)/2$. In terms of the representation $f=f_\Phi$ that was discussed above, we obtain a tuple $(n,a,\delta,b,\varepsilon)\in\Phi$ with $|x-a|<\delta$ and $|f(x)-b|\leq\varepsilon$. Take positive~$q,\eta\in\mathbb Q$ with $B_\eta(q)\subseteq B_\delta(a)\cap[0,1]$. As the singularities of~$f$ are isolated, we may assume~$q\in D_f$. By the coherence conditions (see Definition~II.6.1 of~\cite{simpson09}), there is an $n'$ with~$(n',q,\eta,b,\varepsilon)\in\Phi$. We get
\begin{equation*}
|f(q)-f(x)|\leq|f(q)-b|+|b-f(x)|\leq 2\varepsilon\leq f(x)-N
\end{equation*}
and hence~$f(q)\geq f(x)-|f(q)-f(x)|\geq N$. Let us also observe that~$r^Y\notin D_f$ is a $\Sigma^0_2$-statement. By Theorem~\ref{thm:isolated-existence} (even by the special case for which we have given a new proof), we thus obtain
\begin{equation*}
\mathsf{RCA}_0\vdash\forall f\big(\varphi'(f)\to\exists^iY:r^Y\notin D_f\big).
\end{equation*}
By contraposition, $\mathsf{RCA}_0$ proves that any continuous function~$f:D_f\to\mathbb R$ with~$\varphi(f)$ and~$[0,1]\subseteq D_f$ is bounded on~$[0,1]$. Apart from the fact that we have focused on the interval~$[0,1]$ for convenience, this is the conclusion of the theorem.
\end{proof}

\bibliographystyle{amsplain}
\bibliography{WKL-conservative}

\end{document}